\newtheorem{theorem}{Theorem}
\newtheorem{lemma}{Lemma}
\newcommand{\orb}{\mathop{\rm Orb}\nolimits}
\newcommand{\gl}{\mathop{\rm GL}\nolimits}
\newcommand{\md}{\mathop{\rm (mod}\nolimits}
\newcommand{\cat}{\mathop{\rm cat}\nolimits}
\newtheorem{cor}{Corollary}
\title {Borsuk-Ulam type theorems for manifolds}
\author {Oleg R. Musin \thanks{Research supported in part by NSF grant DMS-0807640 and NSA grant MSPF-08G-201.}}
\begin{document}
\date{}
\maketitle

\begin{abstract} This paper establishes a Borsuk-Ulam type theorem for  PL-manifolds with a finite group  action, depending on the free equivariant cobordism class of a manifold. In particular, necessary and sufficient conditions are considered for a manifold with a free involution to be a Borsuk-Ulam type.
\end{abstract}

\medskip

\noindent {\bf Keywords:} Borsuk - Ulam theorem, group action, equivariant cobordism.

\section{Introduction}

The Borsuk-Ulam theorem states that any continuous antipodal $f:{\Bbb S}^n \to {\Bbb R}^n$  has zeros. (A map $f$ is called {\it antipodal} if $f(-x)=-f(x)$.)
One of the most interesting proofs of this theorem is B\'ar\'any's geometric proof \cite{Bar} (see also \cite[Sec. 2.2]{Mat}). A very similar proof was given in  \cite{MW}, and \cite{Ste} has several more references for proofs of this type.

Let $X={\Bbb S}^n\times[0,1], \, X_0={\Bbb S}^n\times \{0\},$ and  $X_1={\Bbb S}^n\times \{1\}.$
Let $\tau(x,t)=(-x,t)$, where $(x,t)\in X$,  $x\in {\Bbb S}^n$, and $t\in[0,1]$. Clearly, $\tau$ is a free involution on $X$.

The first step of B\'ar\'any's proof is to show that any continuous antipodal (i.e. $F(\tau(x))=-F(x)$) map $F: X\to {\Bbb R}^n$ can be approximated by ``sufficiently generic'' antipodal  maps (see  \cite[Sec. 2.2]{Mat}). 

Let $f_i:{\Bbb S}^n \to {\Bbb R}^n$, where $i=0,1$, be antipodal generic maps.
Let $F(x,t)=tf_1(x)+(1-t)f_0(x)$. 
Since $F$ is generic, the set $Z_F:=F^{-1}(0)$ is a manifold of dimension one. Then $Z_F$ consists of arcs $\{\gamma_k\}$ with ends in $Z_{f_i}:=Z_F\bigcap X_i=f_i^{-1}(0)$ and cycles which do not intersect $X_i$. Note that $\tau(Z_F)=Z_F$ and $\tau(\gamma_i)=\gamma_j$ with $i\ne j$.  Therefore, $(Z_F,Z_{f_0},Z_{f_1})$ is a ${\Bbb Z}_2$-{\it cobordism}.
It is not hard to see that $Z_{f_0}$ {\it is ${\Bbb Z}_2$-cobordant to} $Z_{f_1}$  if and only if $|Z_{f_1}|=|Z_{f_0}|\md 4).$

To complete the proof, take $f_0$ as the standard orthogonal projection of  ${\Bbb S}^n$ onto ${\Bbb R}^n$:
 $$
 f_0(x_1,\ldots,x_n,x_{n+1})=(x_1,\ldots,x_n), \; \mbox{ where } \; x_1^2+\ldots+x_{n+1}^2=1.
 $$
 Since $|Z_{f_0}|=2$, we have $|Z_{f_1}|=2\md 4).$ This equality shows that for any  antipodal generic $f_1$ the set $Z_{f_1}=f_1^{-1}(0)$ is not empty.

\medskip

Our analysis of this proof shows that it can be extended for a wide class of manifolds. For instance,  consider two-dimensional   orientable manifolds $N=M^2_g$ of even genus $g$ and non-orientable manifolds $N=P^2_m$ with even $m$. Without loss of generality,  we can assume that $N$ is  ``centrally symmetric" embedded to ${\Bbb R}^k$, where $k=3$ for $N=M^2_g$ and $k=4$ for $N=P^2_m$. That means $A(N)=N$, where $A(x)=-x$ for $x\in{\Bbb R}^k$. Then $T:=A|_N:N\to N$ is a free involution. It can be shown that there is a projection of $N\subset{\Bbb R}^k$ into a 2-plane $R$ passing through the origin 0 with  $|Z_{f_0}|=2.$ (See details in Corollary \ref{corMcM}.)

Actually, B\'ar\'any's proof can be (almost word for word)  applied for $N$. Namely, the following statement holds:\\
{\it Let $N=M^2_g$ with even $g$ or $N=P^2_m$ with even $m$. Then for any continuous $h:N\to {\Bbb R}^2$ there is $x\in N$ such that $h(T(x))=h(x)$.}

 Note that $N$ can be represented as a connected sum $M\# M$. This statement can be extended for $N=M\# M$ with any closed manifold $M$ (see Corollary \ref{corMcM}).\footnote{It is not hard to prove the corollary using the standard techniques developed in \cite{CF60}. However, we couldn't find this kind statements in the Borsuk-Ulam theorem literature.}  We see that one of the most important steps here is the existence of a generic equivariant $f_0:N^n\to {\Bbb R}^n$ with $|Z_{f_0}|=2\md 4).$

This approach works for any free group action. Namely,
for both technical steps a ``generic approximation lemma'' and a ``$(Z_F,Z_{f_0},Z_{f_1})$ cobordism lemma'' can be extended for free  actions of a finite group $G$ (Section 2).

Let $G$ be a finite group acting free on a closed connected PL-manifold $M^m$ and linearly on ${\Bbb R}^n$.  In Section 3 we show that a Borsuk-Ulam type theorem for $M$, depending on the free equivariant cobordism class of $M$ (Theorem 1). For the case $m=n$,  Theorem \ref{thmdeg} shows that  if  there is  a  continuous equivariant  transversal to zeros  $h: M^n\to {\Bbb R}^n$  with $|Z_{h}|=|G|\md{2|G|})$, then for any continuous equivariant  $f: M^n\to {\Bbb R}^n$ the  zero set $Z_f$ is not empty.

In Section 4, this approach is applied to the classical case: manifolds with free involutions. There  we give  necessary and sufficient
conditions for ${\Bbb Z}_{2}$-manifolds to have a Borsuk-Ulam type theorem (Theorem \ref{thmBUT}).

Lemma \ref{lm4} shows that the zero set $Z_f$ is invariant up to $G$-cobordisms. Based on this fact, in Section 5 we define homomorphisms $\mu$ of free $G$-cobordism groups which can be considered as obstructions for $G$-maps. Theorem \ref{thmGcb}  shows that for a free $G$-manifold $M^m$ and a given linear action $G$ on ${\Bbb R}^n$, if this invariant is not zero in the free equivariant cobordisms, then for any continuous equivariant  $f: M^m\to {\Bbb R}^n$ the  set $Z_f$ is not empty. In this section also we also provide some applications of Theorem \ref{thmGcb}  for the case $G=({\Bbb Z}_2)^k$.

The main goal of this paper is to show that the geometric proof gives a method for checking whether  a $G$-manifold $M$ is of the Borsuk-Ulam type. Namely, from Theorem \ref{thmdeg} and its extension Corollary \ref{cor2}, it follows that if there exists a generic equivariant $h: M^m\to {\Bbb R}^n$ with $[Z_h]\ne0$  in the correspondent group of cobordisms, then for any continuous equivariant  $f: M^m\to {\Bbb R}^n$ the   set of zeros $Z_f\ne\emptyset$. 

\section{Generic $G$-maps}

If $G$ is a group and $X$ is a set, then a  group action of $G$ on $X$ is a binary function $G\times X\to X$
denoted $(g,x)\to g(x)$ which satisfies the following two axioms: (1) $(gh)(x) = g(h(x))$ for all $g, h$ in $G$ and $x$ in $X$; (2) $e(x) = x$ for every $x$ in $X$ (where $e$ denotes the identity element of $G$).
The set $X$ is called a $G$-set. The group $G$ is said to act on $X$.

Let a finite group $G$ act on a set $X$. We say that $Y\subset X$ is a $G$-subset if $g(y)\in Y$ for all $g\in G$ and $y\in Y$, i.e. $G(Y)=Y$. Clearly, for any   $x\in X$ the orbit $G(x):=\{g(x),\, g\in G\}$ is a $G$-subset.
Denote by $\orb(X,G)$ the set of orbits of $G$ on $X$, i.e. $\orb(X,G)=\{G(x)\}$

Let $G_x:=\{g\in G|g(x)=x\}$ denote the {\it stabilizer} (or {\it isotropy subgroup}). Let $X^H=\{x\in X|g(x)=x, \forall g\in H\}$ denote the fixed point set of a subgroup $H\subset G$. If $H$ is isomorphic to $G_x\ne e$, then $X^H\ne\emptyset$.

Recall that a group action is called {\it free} if $G_x=\{e\}$ for all $x$, i.e. $g(x)=x$ if only if $g=e$. Note that for a free action $G$ on $X$ each orbit $G(x)$ consists of $|G|$ points.

Here we consider {\it piece-wise linear} (or {\it PL}, or {\it simplicial}) $G$-manifolds $X$, i.e.  there is a triangulation $\Lambda$ of $X$ such that for any $g\in G$, any simplex $\sigma$ of  $\Lambda$ is mapped bijectively onto the simplex $g(\sigma)$. The triangulation $\Lambda$ is called {\it equivariant}. Actually, any smooth $G$-manifold admits an equivariant triangulation \cite{I}.

Every free action of a finite group $G$  on a compact PL-manifold $X$ admits an equivariant triangulation such that for each simplex  there are no vertices in the same orbit. Otherwise, we indeed can subdivide these simplices.

A map $f:X\to Y$ of  $G$-sets $X$ and $Y$ is called {\it equivariant} (or {\it $G$-map}) if  $f(g(x))=g(f(x))$ for all $g\in G, x\in X$.


For a PL $G$-space (in particular, for a PL  $G$-manifold) we say that an equivariant map $f:X\to{\Bbb R}^n$ is {\it simplicial} if $f$ is a linear map for each simplex $\sigma\in\Lambda$. For an equivariant triangulation $\Lambda$, any simplicial  map $f:X \to {\Bbb R}^n$ is uniquely determined by the set of vertices $V(\Lambda)$. Indeed, for each simplex $\sigma$ of $\Lambda$, $f$ is linear, and therefore is determined by the vertices of $\sigma$.

Any equivariant continuous map can be approximated by an equivariant simplicial map.

\begin{lemma} 
\label{lemma:lemma1}	Let $G$ be a finite group  acting linearly on ${\Bbb R}^n$. Let $X$ be a compact simplicial  $G$-space.    Then for any equivariant  continuous   $f:X \to{\Bbb R}^n$ and $\varepsilon>0$,  there is  an equivariant  simplicial  map $\bar f:X \to {\Bbb R}^n$ such that
$\; ||f(x)-\bar f(x)||\le \varepsilon$  for all $x\in X$.
\end{lemma}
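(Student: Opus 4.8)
The plan is to reduce the equivariant approximation problem to the classical simplicial approximation theorem applied on a fundamental domain, and then spread the approximation around by the group action. First I would pass to a sufficiently fine iterated barycentric subdivision $\Lambda^{(r)}$ of the given equivariant triangulation $\Lambda$, chosen so that $f$ varies by less than $\varepsilon/C$ on each closed star, where $C$ is a constant controlling the operator norms of the finitely many linear maps $x\mapsto g(x)$ on ${\Bbb R}^n$ (here I use that $G$ is finite and acts linearly, so $\max_{g\in G}\|g\|$ is a finite constant). Since barycentric subdivision is natural, $G$ still acts simplicially on $\Lambda^{(r)}$, and — using the remark in the excerpt that a free action admits an equivariant triangulation with no two vertices of a simplex in the same orbit — I may assume this ``no repeated orbit in a simplex'' property persists (it does, since subdivision only refines).

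Next I would choose a set $S\subset V(\Lambda^{(r)})$ of orbit representatives for the $G$-action on vertices. For each $v\in S$ I would invoke ordinary simplicial approximation locally: pick a point $\bar f(v)\in{\Bbb R}^n$ within $\varepsilon/C$ of $f(v)$ — for instance just take $\bar f(v)=f(v)$, since there is no simplicial-approximation constraint forcing $\bar f(v)$ to be a vertex of a fixed target complex; the target is all of ${\Bbb R}^n$. Then extend the definition of $\bar f$ to all vertices by equivariance: for $w=g(v)$ with $v\in S$, set $\bar f(w):=g(\bar f(v))$. This is well defined precisely because the action on vertices is free, so the $g$ with $g(v)=w$ is unique. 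Finally extend $\bar f$ linearly over each simplex of $\Lambda^{(r)}$; the result is a simplicial map, and it is equivariant because $g$ carries simplices to simplices linearly and $\bar f$ was built to intertwine the vertex actions — one checks on an arbitrary point $x=\sum t_i v_i$ of a simplex that $\bar f(g(x))=\sum t_i \bar f(g(v_i))=\sum t_i g(\bar f(v_i))=g(\sum t_i\bar f(v_i))=g(\bar f(x))$, using linearity of $g$ on ${\Bbb R}^n$.

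It remains to estimate $\|f(x)-\bar f(x)\|$. For $x$ in a simplex $\sigma=\langle v_0,\dots,v_k\rangle$ write $x=\sum t_i v_i$; then $\bar f(x)=\sum t_i \bar f(v_i)$, and since $\bar f(v_i)=f(v_i)$ and $f(v_i)$ lies within $\varepsilon/C$ of $f(x)$ (both $x$ and $v_i$ lie in the star of $v_i$, where $f$ oscillates by at most $\varepsilon/C$ by our choice of $r$), convexity gives $\|\bar f(x)-f(x)\|\le \sum t_i\|f(v_i)-f(x)\|\le \varepsilon/C\le\varepsilon$. (If one instead perturbs $\bar f(v)$ away from $f(v)$ for vertices in $S$ — unnecessary here but harmless — the same bound holds with an extra $\varepsilon/C$ term, still absorbable.)

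The one genuinely delicate point is the \emph{simultaneous} control of the approximation on all of $X$ by working only on orbit representatives: a careless choice of $\bar f$ on $S$ could, after translating by group elements $g$ of large operator norm, drift far from $f$ elsewhere. This is exactly why I fold $C=\max_{g\in G}\|g\|$ into the mesh condition up front. Since here I take $\bar f(v)=f(v)$ on $S$ and then define $\bar f(g(v))=g(f(v))$, the only error is the simplicial-interpolation error inside each simplex, which is uniformly small by the star-oscillation bound; equivariance of $f$ itself ($f(g(v))=g(f(v))$) guarantees there is no mismatch between the ``translated'' definition and the true values of $f$ at those vertices, so no amplification by $C$ actually occurs. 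The finiteness of $G$ is used only to make the subdivision argument uniform and to guarantee the equivariant triangulation exists; freeness of the action is used only to make the equivariant extension over vertex orbits well defined. $\qed$
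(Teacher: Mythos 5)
Your proposal is correct and essentially matches the paper's proof: both pass to a sufficiently fine equivariant subdivision, set $\bar f(v)=f(v)$ at the vertices, and extend linearly over each simplex, with the error controlled by the fineness of the triangulation (the paper quotes the classical simplicial approximation theorem for this, you use uniform continuity of $f$ directly). One small remark: the lemma does not assume the $G$-action on $X$ is free, but, as you yourself observe, since $\bar f(v)=f(v)$ and $f$ is equivariant the equivariant extension is well defined without freeness, and the constant $C=\max_{g\in G}\|g\|$ never actually enters the estimate.
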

\begin{proof}
It is well known (see, for instance, \cite[Section 2.2]{FR})
that any continuous map on a compact PL-complex can be approximated by simplicial maps. Therefore there exists a triangulation $\Lambda$ and a simplicial  map $\hat f$ such that $||{f(x)-\hat f(x)}||\le \varepsilon$ for all $x\in X$. Clearly, there exists an equivariant subdivision $\Lambda'$ of $\Lambda$ with  $G(V(\Lambda))\subset V(\Lambda')$. For $v\in V(\Lambda')$, let $\bar f(v):=f(v)$. Then $\bar f|_{V(\Lambda')}$ defines a simplicial map with  $||f(x)-\bar f(x)||\le \varepsilon$.
\end{proof}

Let $\rho:G\to \gl(n,{\Bbb R})$ be a representation of a group $G$ on ${\Bbb R}^n$. In other words, $G$ is  acting linearly on ${\Bbb R}^n$. Then $F:={(\Bbb R}^n)^G$ is a linear subspace of ${\Bbb R}^n$. Denote by $L$ an invariant linear subspace of ${\Bbb R}^n$ which is transversal to $F$. Then $L^G=\{0\}$. So without loss of generality we can consider only linear actions with  $({\Bbb R}^n)^G=\{0\}$, i.e. linear actions on ${\Bbb R}^n$ such that the fixed point set of these actions is the origin.

Let  $f:X\to{\Bbb R}^n$. Denote by   $Z_f$  the set of zeros, i.e. $$Z_f=\{x\in X: f(x)=0\}.$$

In this paper we need generic maps with respect to $Z_f$. Let $X$ be a simplicial $m$-dimensional $G$-manifold.  Set $O_\varepsilon:=\{v\in{\Bbb R}^n: ||v||<\varepsilon\}$. A continuous equivariant  $f:X^m\to{\Bbb R}^n$ is called {\it transversal to zeros} if $Z_f$ is a manifold of dimension $m-n$, and there is $\varepsilon>0$ such that for any $v\in O_\varepsilon$, the sets $f^{-1}(v)$ and $Z_f$ are homeomorphic.

\medskip

\noindent{\bf Definition.} Given a finite group $G$. Let $X^m$ be a PL free  $G$-manifold. Let $\Lambda$ be an equivariant triangulation of $X$.  We say that an equivariant simplicial map $f:X\to{\Bbb R}^n$ is {\it generic} (with respect to zeros) if $Z_f$ does not intersect the $(n-1)$-skeleton $\Lambda(n-1)$ of $\Lambda$. (Recall that the $k$-skeleton of $\Lambda$ is the subcomplex $\Lambda(k)$  that consists of all simplices of dimension at most $k$.)

\medskip

All simplicial generic maps are transversal to zeros.

\begin{lemma} 
\label{lm2}	Let $G$ be a finite group acting linearly on ${\Bbb R}^n$ with $({\Bbb R}^n)^G=\{0\}$.  Let $X^m$ be a PL compact $m$-dimensional free $G$-manifold with or without boundary.  Let $f:X\to{\Bbb R}^n$ be an equivariant simplicial generic map. If $n\le m$, then $Z_f$ is an invariant submanifold of $X$ of dimension $m-n$. Moreover, if the boundary $\partial Z_f$ is not empty, then it lies in $\partial X$.
\end{lemma}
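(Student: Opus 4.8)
The plan is to analyze $Z_f$ locally, simplex by simplex, using the genericity hypothesis to reduce to elementary linear algebra, and then use the freeness of the action to patch the local pictures together into a global invariant submanifold. First I would fix the equivariant triangulation $\Lambda$ and recall that, by genericity, $Z_f\cap\Lambda(n-1)=\emptyset$; hence $Z_f$ meets only the interiors of simplices of dimension $\geq n$. For a single $k$-simplex $\sigma$ with $k\geq n$, the map $f$ restricted to $\sigma$ is affine, so $f|_\sigma=f_0+A$ where $A$ is linear on the affine hull of $\sigma$; the condition that $Z_f$ avoids $\partial\sigma\subset\Lambda(n-1)$ forces the affine map $f|_\sigma$ to have rank exactly $n$ on $\sigma$ (otherwise the zero set, if nonempty, would be a positive-dimensional affine subspace meeting the boundary, or would be degenerate). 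Thus $Z_f\cap\sigma$ is the intersection of $\mathrm{int}(\sigma)$ with an affine subspace of codimension $n$, which is an open (relatively) convex piece of a $(k-n)$-plane — in particular a PL manifold of dimension $k-n$, with any boundary lying on $\partial\sigma$; but since $Z_f$ avoids $\partial\sigma$ entirely when $\sigma$ is interior to $X$, these pieces are boundaryless there.

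Next I would glue these pieces across shared faces. If $\tau$ is a common face of two top simplices, a point $x\in Z_f$ lying in the interior of $\tau$ has $\dim\tau\geq n$, and one checks that the local pictures from the two incident simplices fit together to give a neighborhood of $x$ in $Z_f$ homeomorphic to $\mathbb{R}^{m-n}$ (when $x\in\mathrm{int}(X)$), because $f$ has rank $n$ transversally on each side and the affine zero-loci agree on $\tau$. This is the standard transversality/PL-general-position argument, and genericity is exactly the hypothesis that makes it go through cleanly: $Z_f$ is a PL submanifold of dimension $m-n$. For boundary points: if $x\in\partial X$, the link of $x$ in $X$ is a PL $(m-1)$-ball rather than a sphere, and the same local analysis gives $Z_f$ near $x$ modeled on a half-space $\mathbb{R}^{m-n-1}\times[0,\infty)$ when $Z_f$ actually reaches $\partial X$; hence $\partial Z_f\subseteq \partial X$. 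Since $f$ maps into $(\mathbb{R}^n)^G=\{0\}$'s complement appropriately and is equivariant, $g(Z_f)=Z_f$ for every $g\in G$ is immediate from $f(g(x))=g(f(x))$ and $g(0)=0$: if $f(x)=0$ then $f(g(x))=g(0)=0$. So $Z_f$ is an invariant submanifold.

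The role of freeness and of $(\mathbb{R}^n)^G=\{0\}$ deserves a word: freeness guarantees (via the remark preceding the lemma about subdividing so no simplex has two vertices in one orbit) that the equivariant triangulation behaves well and that $G$ permutes simplices without fixing any simplex pointwise in a way that would obstruct transversality; and $(\mathbb{R}^n)^G=\{0\}$ is what lets us assume the origin is the only candidate bad value and keeps the target clean, though strictly for this lemma the main content is the local linear-algebra argument. I expect the main obstacle to be the careful verification at the gluing step — that the union over all incident simplices of the local affine zero-pieces is genuinely a PL manifold chart and not merely a polyhedron — i.e. checking the link of a point of $Z_f$ is a sphere (interior) or ball (boundary). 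This is where one must invoke that $f$ is transversal to zeros in the PL sense, which the paper has already recorded follows from genericity ("All simplicial generic maps are transversal to zeros"), so I would lean on that statement and on standard PL general-position results (e.g.\ from \cite{FR}) rather than reprove them.
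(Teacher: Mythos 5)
Your plan follows the same route as the paper's own (very terse) proof: on each simplex $f$ is affine, genericity reduces the local structure of $Z_f$ to elementary linear algebra, the global manifold statement is standard PL general position, and equivariance gives $g(Z_f)=Z_f$ immediately. The approach is right, and is in fact more detailed than the paper, which only observes that $Z_f\cap\sigma$ is cut out by a generic affine equation in each $m$-simplex and concludes that $Z_f$ is a locally polyhedral $(m-n)$-manifold. Two of your intermediate assertions, however, are wrong as stated and should be repaired. First, the containment $\partial\sigma\subset\Lambda(n-1)$ holds only when $\dim\sigma=n$; for a $k$-simplex with $k>n$ what genericity actually gives is that $Z_f$ misses the faces of $\sigma$ of dimension at most $n-1$, and full rank of $f|_\sigma$ then follows by a short extra argument: if the rank were $r<n$ and the zero set met the interior of $\sigma$, then $Z_f\cap\sigma$ would be a compact convex polytope each of whose extreme points lies in the relative interior of a face $F$ with $\dim F\le r\le n-1$, contradicting genericity. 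Second, the claim that ``$Z_f$ avoids $\partial\sigma$ entirely when $\sigma$ is interior to $X$'' is false whenever $m>n$: there $Z_f\cap\sigma$ is a compact convex $(m-n)$-polytope whose relative boundary lies in the faces of $\sigma$ of dimension between $n$ and $m-1$, and it is exactly these boundary pieces that your gluing paragraph must (and does) handle, so that step is doing real work rather than a vacuous check; it is also where the verification that links of points of $Z_f$ are spheres (interior) or balls (boundary) lives, which you reasonably delegate to standard PL transversality, the same level of detail at which the paper itself stops. With those two corrections your argument matches the paper's.
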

\begin{proof}
Note that
$Z_f\subset X$ is a locally polyhedral surface  consisting of $(m-n)$-dimensional cells.  This is because for each $m$-simplex $\sigma$  we have the linear map $f|_\sigma:\sigma\to {\Bbb R}^n$, and $Z_f\cap\sigma$ is defined by the generic linear equation $f(x)=0$.  Hence, the components of $Z_f$ are PL manifolds.
\end{proof}

For $G$-spaces, the Tietze-Gleason theorem states: {\it Let a compact group $G$  act on $X$ with a closed invariant set $A$. Let $G$ also acts linearly on ${\Bbb R}^n$. Then any equivariant $f:A\to {\Bbb R}^n$ extends to $f:X\to {\Bbb R}^n$}  (see \cite[Theorem 2.3]{Br}).


It is known from results of Bierstone \cite{Bier} and Field \cite{Field} on $G$-transversality theory that the zero set is a stratified set. If $G$ acts free on a compact smooth $G$-manifold $X$, then this theory implies that the set of non-generic equivariant smooth $f:X^m\to {\Bbb R}^n$ has measure zero in the space of all smooth $G$-maps. Let us extend this result as well as the Tietze-Gleason theorem for the  PL case.

\begin{lemma} 
\label{lm3}	Let $G$ be a finite group acting linearly on ${\Bbb R}^n$ with $({\Bbb R}^n)^G=\{0\}$. Let $X^m$ be a PL compact free $G$-manifold  with or without boundary.   Let $\Lambda$ be an equivariant triangulation of $X$.  Let $A$ be an invariant subset  of $V(\Lambda)$ or let $A=\emptyset$. Let $h:A\to {\Bbb R}^n$ be a given equivariant generic map.
Then  the set of non-generic simplicial maps $f:X \to{\Bbb R}^n$ with $f|_A=h$ has measure zero in the space of all possible equivariant maps $f:V(\Lambda)\to {\Bbb R}^n$ with $f|_A=h$.
\end{lemma}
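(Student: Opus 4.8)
\medskip\noindent\textbf{Proof proposal.} The plan is to present the space of admissible maps as one finite-dimensional Euclidean space and then to cut the non-generic locus into finitely many pieces, each contained in the preimage of a set of strictly smaller dimension. Since $X$ is compact, $V(\Lambda)$ is finite, and since $A$ is invariant every $G$-orbit in $N:=V(\Lambda)\setminus A$ is disjoint from $A$. First I would note that an equivariant simplicial $f\colon X\to{\Bbb R}^n$ with $f|_A=h$ is nothing but an arbitrary choice of $f(w)\in{\Bbb R}^n$ for one representative $w$ of each orbit in $N$: the remaining values on that orbit are forced by $f(g(w))=\rho(g)f(w)$, and this is well defined and equivariant precisely because $G$ acts \emph{freely} on $X$ (if $g(w)=g'(w)$ then $g=g'$). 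Thus the space $P$ of admissible maps is naturally identified with $({\Bbb R}^n)^{s}$ where $s=|\orb(N,G)|$, and ``measure zero'' means Lebesgue-null in $P$. Next, as $f$ is affine in barycentric coordinates on every simplex, $Z_f\cap\sigma\ne\emptyset$ iff $0\in\conv\{f(v):v\in\sigma\}$; so $f$ is non-generic exactly when this holds for some simplex $\sigma$ of the \emph{finite} complex $\Lambda(n-1)$, and it suffices to prove that for each fixed $\sigma=\{v_0,\dots,v_k\}$ with $k=\dim\sigma\le n-1$ the set $B_\sigma:=\{f\in P:0\in\conv\{f(v_0),\dots,f(v_k)\}\}$ is null in $P$.

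Fix such a $\sigma$ and order its vertices so that $v_0,\dots,v_{r-1}\notin A$ while $v_r,\dots,v_k\in A$; put $q_i:=h(v_i)$ for $i\ge r$. We may assume, after an equivariant subdivision of $\Lambda$ as in Section 2, that no simplex has two vertices in one $G$-orbit; then $v_0,\dots,v_{r-1}$ lie in pairwise distinct orbits, so $f(v_0),\dots,f(v_{r-1})$ depend on $r$ \emph{distinct} coordinates of $P=({\Bbb R}^n)^s$, each through an invertible linear map $\rho(g_{v_i})$. Hence $B_\sigma$ is the preimage, under the coordinate projection $P\to({\Bbb R}^n)^r$ onto those $r$ coordinates followed by the block-diagonal linear isomorphism $\mathrm{diag}\bigl(\rho(g_{v_0}),\dots,\rho(g_{v_{r-1}})\bigr)$, of
\[
S_\sigma:=\bigl\{(y_0,\dots,y_{r-1})\in({\Bbb R}^n)^r:\ 0\in\conv\{y_0,\dots,y_{r-1},q_r,\dots,q_k\}\bigr\}.
\]
Since coordinate projections and linear isomorphisms pull Lebesgue-null sets back to Lebesgue-null sets, the lemma reduces to showing that each $S_\sigma$ is null in $({\Bbb R}^n)^r$.

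This last reduction is the step I expect to need the most care, and it is where ``$k\le n-1$'' and ``$h$ generic'' enter. If $r=0$, then $S_\sigma\ne\emptyset$ would mean $0\in\conv\{h(v_0),\dots,h(v_k)\}$, i.e. $Z_h$ meets the simplex $\sigma$, whose vertices all lie in $A$ and whose dimension is $k\le n-1$ --- contradicting genericity of $h$; so $S_\sigma=\emptyset$. If $r\ge1$, I would bound $\dim S_\sigma$ by writing $S_\sigma=\pi\bigl(\Phi^{-1}(0)\bigr)$, where $\Phi\colon\Delta^k\times({\Bbb R}^n)^r\to{\Bbb R}^n$, $\Phi(t,y)=\sum_{i<r}t_iy_i+\sum_{i\ge r}t_iq_i$, and $\pi$ forgets $t$. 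Over $t\in\Delta^k$ with $(t_0,\dots,t_{r-1})\ne0$, the equation $\Phi(t,\cdot)=0$ is, one ${\Bbb R}^n$-coordinate at a time, $n$ nontrivial affine equations, so the fiber is an affine subspace of dimension $n(r-1)$; over $t$ with $t_0=\dots=t_{r-1}=0$ it reads $0=\sum_{i\ge r}t_iq_i$, impossible since $0\notin\conv\{q_r,\dots,q_k\}$ --- genericity of $h$ again, applied to the face $\{v_r,\dots,v_k\}\subset A$ of $\sigma$ (of dimension $k-r\le n-1$) --- so that fiber is empty. Therefore $\dim\Phi^{-1}(0)\le k+n(r-1)$, which is $<nr$ precisely because $k\le n-1<n$; being a finite union of smooth images of manifolds of dimension $<nr$, the set $S_\sigma$ is Lebesgue-null in $({\Bbb R}^n)^r$. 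Summing over the finitely many $\sigma\in\Lambda(n-1)$ gives the lemma. The only delicate points are the bookkeeping that realizes $S_\sigma$ inside a full coordinate subspace of $P$ (so that the reduction is to a fixed-dimension question) and the correct use of genericity of $h$ on the faces of $\sigma$ lying in $A$; the remainder is linear algebra and dimension counting.
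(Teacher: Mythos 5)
Your proof is correct, and at the level of strategy it matches the paper's: parametrize equivariant simplicial maps by one value in ${\Bbb R}^n$ per $G$-orbit of vertices outside $A$ (freeness makes this an identification with $({\Bbb R}^n)^s$), write the non-generic locus as a finite union, over simplices $\sigma$ of dimension at most $n-1$, of bad sets, and show each bad set is null. The key counting step, however, is genuinely different. The paper considers only $(n-1)$-simplices not entirely contained in $A$ and notes that $0\in f(\sigma)$ forces $\det\bigl(f(v_1),\dots,f(v_n)\bigr)=0$, a single polynomial equation cutting out a codimension-one subvariety of the parameter space (lower-dimensional simplices are absorbed as faces of $(n-1)$-simplices). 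You keep the exact condition $0\in\conv\{f(v):v\in\sigma\}$ for every $\sigma$ of dimension $k\le n-1$, fiber the incidence set over the barycentric coordinates $t\in\Delta^k$, and bound its dimension by $k+n(r-1)<nr$, invoking genericity of $h$ precisely where it is needed: for the fibers over $t$ supported on $\sigma\cap A$, and for the case $r=0$. Your version is more robust in one respect: it does not degenerate when the fixed vectors $h(v_j)$, $v_j\in\sigma\cap A$, are linearly dependent, a case in which the paper's determinant vanishes identically in the free variables (the conclusion still holds there, but one must fall back on the convex-hull condition, exactly as you do); the price is appealing to a semialgebraic/stratification dimension-and-measure argument instead of the one-line ``proper algebraic subvariety is null.''

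One caution: treat the property that no simplex of $\Lambda$ has two vertices in one orbit as a standing hypothesis on the given triangulation (this is what Section 2 of the paper arranges) rather than a mid-proof ``after an equivariant subdivision'' reduction. Subdividing enlarges $V(\Lambda)$ and therefore changes the parameter space in which the measure-zero statement is asserted, so it is not literally a without-loss-of-generality step; with the property assumed at the outset, your identification of $B_\sigma$ as the preimage of $S_\sigma$ under a coordinate projection composed with $\mathrm{diag}\bigl(\rho(g_{v_0}),\dots,\rho(g_{v_{r-1}})\bigr)$ is exactly right. Note that some such hypothesis is genuinely needed (without it the statement can fail, e.g.\ when a simplex contains $v$ and $g(v)$ with $\rho(g)=-\mathrm{id}$), and the paper's own proof uses it at the same spot, equally implicitly.
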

\begin{proof} Let $V':=V(\Lambda)\setminus A$. Let $C\in \orb(V',G)$ be an orbit of $G$ on $V'$. Then $|C|=p$, where $p:=|G|$. Since for any equivariant $f:V(\Lambda) \to{\Bbb R}^n$ and $v\in V(\Lambda)$  we have $f(g(v))=g(f(v))$, the vector $u=f(v)\in {\Bbb R}^n$ yields the map $f:C\to {\Bbb R}^n$.

Suppose $G$ has $k$ orbits on $V'$. Take in each orbit $C_i\in\orb(V',G)$ a vertex $x_i$. Denote by $M_G$ the space of all equivariant maps $f:V'\to {\Bbb R}^n$. Then the space $M_G$ is of  dimension $N=kn$.

Consider an $(n-1)$-simplex $\sigma\in\Lambda$ with vertices $v_1,\ldots,v_n$ which are not all from $A$. Note that $f\in M_G$ is not generic  on $\sigma$ if  $\, 0\in f(\sigma)$.
In particular, the hyperplane in ${\Bbb R}^n$ which is defined by vectors $f(v_1),\ldots,f(v_n)$  passes through the origin 0. In other words, the determinant of $n$ vectors $f(v_1),\ldots,f(v_n)$ equals zero.

This constraint gives a proper algebraic subvariety $s(\sigma)$ in $M_G$. Let  $v_1,\ldots,v_d$ be vertices of $\sigma$ which are not in $A$.   Then $s(\sigma)$ is of dimension $N-1$.  Indeed,  $u_i=f(v_i),$ $i=1,\ldots,d$, are vectors in ${\Bbb R}^{n}$. Then the equation $$\det(u_1,\ldots,u_d,h(v_{d+1}),\ldots,h(v_n))=0$$  defines the subvariety $s(\sigma)$ in $M_G$ of dimension $N-1$.

Since the union of all subvarieties $s(\sigma)$ with $\dim{\sigma}=n-1$ is at most of dimension $N-1$, it has measure zero in $M_G$.  From this it follows that the set of all non-generic $f$ has measure zero in this space.
\end{proof}

Let $M_0$ and $M_1$  be closed $m$-dimensional simplicial manifolds with free actions of a finite group  $G$.  Then an $(m+1)$-dimensional simplicial free $G$-manifold $W$ is called a {\it free $G$-cobordism} $(W,M_0,M_1)$ if the boundary $\partial W$ consists of $M_0$ and $M_1$, and the action $G$ on $W$ respects actions on $M_i$.

\begin{lemma} \label{lm4}
Let $G$ be a finite group acting linearly on ${\Bbb R}^n$ with $({\Bbb R}^n)^G=\{0\}$. Let $(W,M_0,M_1)$  be a free $G$-cobordism. Let $f_i: M_i^m\to{\Bbb R}^n$, $i=0,1$, be equivariant simplicial generic maps.   Then there is an equivariant simplicial generic  $F:W \to {\Bbb R}^n$ with $F|_{M_i}=f_i$, and  such that $(Z_F,Z_{f_0},Z_{f_1})$ is a free $G$-cobordism.
\end{lemma}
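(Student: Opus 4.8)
The plan is to build $F$ by the standard "extend-then-perturb" scheme, keeping everything equivariant by working orbit-wise on the vertices of an equivariant triangulation, exactly as in the proof of Lemma~\ref{lm3}. First I would fix an equivariant triangulation $\Lambda$ of $W$ that restricts to equivariant triangulations of $M_0$ and $M_1$ on which $f_0$ and $f_1$ are already simplicial and generic; after an equivariant subdivision I may assume no simplex of $\Lambda$ has two vertices in the same $G$-orbit. The given data $f_0, f_1$ prescribe an equivariant map $h$ on the invariant vertex set $A := V(\Lambda)\cap(M_0\cup M_1)$. By the equivariant (PL) Tietze--Gleason extension recalled before Lemma~\ref{lm3}, $h$ extends to an equivariant simplicial map $F_0: W\to{\Bbb R}^n$ with $F_0|_{M_i}=f_i$; concretely one just chooses equivariant values $F_0(v)$ on the interior orbits $\orb(V(\Lambda)\setminus A, G)$ by picking one representative per orbit and propagating by the $G$-action.

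Next I would apply Lemma~\ref{lm3} verbatim with this $A$ and this $h$: the set of equivariant simplicial maps $F: W\to{\Bbb R}^n$ with $F|_A = h$ that are \emph{not} generic (i.e. whose zero set meets the $(n-1)$-skeleton of $\Lambda$) has measure zero in the affine space of all such equivariant extensions. Hence I can choose $F$ generic, arbitrarily close to $F_0$, with $F|_{M_i}=f_i$. By Lemma~\ref{lm2} applied to the $(m+1)$-manifold $W$, the zero set $Z_F$ is then an invariant PL submanifold of $W$ of dimension $(m+1)-n = m-n+1$, and since $\partial Z_F\subset\partial W = M_0\sqcup M_1$, we get $\partial Z_F = (Z_F\cap M_0)\sqcup(Z_F\cap M_1)$. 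Because $F|_{M_i}=f_i$, we have $Z_F\cap M_i = Z_{f_i}$, so $\partial Z_F = Z_{f_0}\sqcup Z_{f_1}$.

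It remains to check that the triple $(Z_F, Z_{f_0}, Z_{f_1})$ is a \emph{free} $G$-cobordism in the sense defined just before the lemma. Invariance $G(Z_F)=Z_F$ is immediate from equivariance of $F$ and linearity of the $G$-action on ${\Bbb R}^n$ (if $F(x)=0$ then $F(g(x))=g(F(x))=g(0)=0$). Freeness is inherited: $Z_F$ is a $G$-invariant subset of the free $G$-manifold $W$, so $G$ acts freely on it, and likewise on the boundary pieces $Z_{f_i}$; the $G$-action on $Z_F$ visibly restricts to the given actions on $Z_{f_0}, Z_{f_1}$. One should also note that $Z_F$ is a \emph{simplicial} $G$-manifold: with respect to the subdivision of $\Lambda$ induced by the linear slices $\{F|_\sigma = 0\}$ it inherits an equivariant triangulation, so it is legitimately a free $G$-cobordism. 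I expect the only genuinely delicate point to be the transversality/genericity bookkeeping near $\partial W$ --- making sure that the perturbation producing a generic $F$ in the interior does not disturb the already-generic boundary maps $f_i$ and that $Z_F$ meets $\partial W$ cleanly (i.e. in a collar, $Z_F$ looks like $Z_{f_i}\times[0,\varepsilon)$); this is why one insists that $A$ be exactly the boundary vertex set and invokes Lemma~\ref{lm3} with $f|_A$ held fixed, and why the transversal-to-zeros property of $f_i$ was built into the definitions. The rest is routine PL manifold theory.
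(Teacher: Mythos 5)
Your proposal is correct and follows essentially the same route as the paper: invoke Lemma~\ref{lm3} with $A$ the boundary vertex set and $h$ given by $f_0,f_1$ to produce a generic equivariant simplicial extension $F$, then use Lemma~\ref{lm2} to see that $Z_F$ is an invariant $(m-n+1)$-dimensional submanifold with boundary $Z_{f_0}\sqcup Z_{f_1}$ in $\partial W$, with freeness inherited from the free action on $W$. The paper's own proof is just a terser version of exactly this argument, so no further comparison is needed.
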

\begin{proof}  The existence of such a generic $F$ follows from Lemma \ref{lm3}.
Lemma \ref{lm2} implies that $Z_F$ is a $G$-submanifold of $W$.
 Clearly, $g(Z_F)=Z_F$ and $g(Z_{f_i})=Z_{f_i}$  for all $g\in G$. Since
$\partial Z_F=Z_{f_0}\bigsqcup Z_{f_0}$ is an $(m-n)$-dimensional $G$-manifold, we have a free $G$-cobordism $(Z_F,Z_{f_0},Z_{f_1})$.
\end{proof}

\section{$G$-maps and a Borsuk-Ulam type theorem}

In this section we consider a Borsuk-Ulam type theorem for the case $m=n$.

\medskip

\noindent{\bf Definition.} Given a finite group $G$ acting free  on a closed PL-manifold $M^n$  and acting linearly on ${\Bbb R}^n$ with $({\Bbb R}^n)^G=\{0\}$.
Let $f: M^n\to {\Bbb R}^n$ be a  continuous equivariant transversal to zeros map. Since  $Z_f$ is a finite free $G$-invariant subset of $M$, we have $|Z_f|=k\,|G|$, where $k=0,1,2,\ldots$ Set 
$\deg_G(f):=1$ if $k$ is odd, and $\deg_G(f):=0$ if $k$ is even.

\begin{lemma} 
\label{lm5}	Let $G$ be a finite group acting linearly on ${\Bbb R}^n$ with $({\Bbb R}^n)^G=\{0\}$. Let $(W^{n+1},M_0^n,M_1^n)$ be a free $G$-cobordism. Let $h_i: M_i^n\to {\Bbb R}^n, \; i=0,1,$  be  continuous equivariant transversal to zeros maps. Then $\deg_G(h_0)=\deg_G(h_1).$
\end{lemma}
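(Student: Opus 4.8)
The plan is to reduce to simplicial generic maps, apply Lemma~\ref{lm4} to spread the two boundary maps over a generic map on the cobordism $W$, and then extract the congruence from the fact that a compact $1$-manifold has an even number of boundary points.

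First I would replace $h_0$ and $h_1$ by equivariant simplicial generic maps. By Lemma~\ref{lemma:lemma1} each $h_i$ is approximated by an equivariant simplicial map, and by Lemma~\ref{lm3} (applied with $A=\emptyset$) the vertex values may then be moved into general position, so that $h_i$ is approximated in the sup norm, as closely as we wish, by an equivariant simplicial generic map $f_i\colon M_i\to{\Bbb R}^n$. The hypothesis that $h_i$ is transversal to zeros is made exactly so that this approximation does not change $\deg_G$: if $O_\varepsilon$ is a ball on which $h_i^{-1}(\cdot)$ stays homeomorphic to $Z_{h_i}$, then for $\|f_i-h_i\|<\varepsilon$ every zero of $f_i$ lies in the neighbourhood $h_i^{-1}(O_\varepsilon)$ of $Z_{h_i}$, and the local structure of $h_i$ there forces $|Z_{f_i}|=|Z_{h_i}|$; hence $\deg_G(f_i)=\deg_G(h_i)$. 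So it suffices to prove $\deg_G(f_0)=\deg_G(f_1)$.

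Now I would invoke Lemma~\ref{lm4}: there is an equivariant simplicial generic $F\colon W\to{\Bbb R}^n$ with $F|_{M_i}=f_i$ such that $(Z_F,Z_{f_0},Z_{f_1})$ is a free $G$-cobordism. Since $m=n$, this means that $Z_F$ is a compact PL $1$-manifold on which $G$ acts freely, with $\partial Z_F=Z_{f_0}\sqcup Z_{f_1}$ (two finite $G$-sets). Passing to the orbit space, $N:=Z_F/G$ is a compact PL $1$-manifold with $\partial N=(Z_{f_0}\sqcup Z_{f_1})/G$, the action being free and boundary-preserving. Writing $|Z_{f_i}|=k_i\,|G|$, the set $Z_{f_i}/G$ has $k_i$ points, so $N$ has $k_0+k_1$ boundary points. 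A compact $1$-manifold is a disjoint union of circles and closed arcs, hence has an even number of boundary points; therefore $k_0+k_1\equiv 0\pmod 2$, i.e. $k_0\equiv k_1\pmod 2$. By the definition of $\deg_G$ this is precisely $\deg_G(f_0)=\deg_G(f_1)$, and combining with the previous step gives $\deg_G(h_0)=\deg_G(h_1)$.

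The $1$-manifold count is routine; the step that genuinely needs care is the reduction to generic maps — verifying that a small equivariant perturbation of a transversal-to-zeros map $h_i$ has the same number of zeros. This is where the transversality hypothesis is indispensable (without it the parity of $|Z_{f_i}|$ is not controlled), and a fully rigorous treatment would either appeal to the $G$-transversality theory of Bierstone and Field cited above, from which the local structure of $Z_{h_i}$, and hence the zero count, is stable under small $G$-deformations, or else argue directly that near each orbit of $Z_{h_i}$ the map $h_i$ is equivariantly homotopic, through transversal-to-zeros maps, to a standard local model. Everything else in the argument — the existence of the generic extension $F$, the manifold structure of $Z_F$, and the freeness of the $G$-action on it — is handed to us by Lemmas~\ref{lm2}, \ref{lm3} and \ref{lm4}.
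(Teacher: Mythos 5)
Your proof is correct and follows the paper's own argument: approximate $h_i$ by equivariant simplicial generic maps $f_i$ via Lemmas \ref{lemma:lemma1} and \ref{lm3} (with the same appeal to transversality to keep $\deg_G$ unchanged — the paper is no more detailed at this point, simply asserting an equivariant bijection between the zero sets), then extend to a generic $F$ on $W$ by Lemma \ref{lm4} and read the congruence off the compact $1$-manifold $Z_F$. The only deviation is the final count: you pass to the quotient $Z_F/G$ and use that a compact $1$-manifold has an even number of boundary points, whereas the paper stays upstairs, noting that $G$ acts freely on the set of arcs of $Z_F$ (a self-map of an arc has a fixed point) and counting endpoints there; both routes give $|Z_{f_0}|+|Z_{f_1}|\equiv 0 \pmod{2|G|}$, and your quotient version is, if anything, slightly cleaner.
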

\begin{proof}
Lemma \ref{lemma:lemma1} and Lemma \ref{lm3} yield that for any $h_i: M_i^n\to {\Bbb R}^n$ and  $\varepsilon>0$ there is a generic map $a_{i,\varepsilon}$ such that $||a_{i,\varepsilon}(x)-h_i(x)||\le \varepsilon$ for all $x\in M_i$.
If $\varepsilon\to0$, then
$Z_{i,\varepsilon}:=a_{i,\varepsilon}^{-1}(0)\to Z_{h_i}$.
This implies that for a sufficiently small $\varepsilon$ there is an equivariant bijection between $Z_{i,\varepsilon}$ and $Z_{h_i}$. Therefore, $\deg_G(a_{i,\varepsilon})=\deg_G(h_i)$.

Let $\varepsilon$ be  sufficiently small. Set $f_i:=a_{i,\varepsilon}$.
From Lemma \ref{lm4} it follows that there is an equivariant simplicial generic $F: W \to {\Bbb R}^n$ with $F|_{M_i}=f_i$ such that  $(Z_F,Z_{f_0},Z_{f_1})$ is a free $G$-cobordism.
 We have $m=n$.  Then $Z_F$ is a submanifold of $W$ of dimension one. Therefore $Z_F$ consists of arcs $\gamma_1,\ldots,\gamma_\ell$ with ends in $Z_{f_i}$ and cycles which do not intersect $M_i$.

Since any continuous map $s:\gamma_k\to \gamma_k$ has a fixed point,  $G$ cannot act free on  $\gamma_k$. Therefore, $G$ acts free on the set of all arcs $\{\gamma_k\}$. Moreover, the ends of any arc cannot lie in the same orbit of $G$.   From this it follows that  $|Z_{f_1}|=|Z_{f_0}|$ (mod 2$|G|)$, i.e. $\deg_G(f_0)=\deg_G(f_1)$.
\end{proof}

\begin{theorem} 
\label{thmdeg}	Let $G$ be a finite group acting linearly on ${\Bbb R}^n$ with $({\Bbb R}^n)^G=\{0\}$. Let $M^n$ be a closed connected PL free $G$-manifold. If there is a closed PL free $G$-manifold $N^n$ which is free $G$-cobordant to $M^n$ and a  continuous equivariant  transversal to zeros  $h: N^n\to {\Bbb R}^n$  with $\deg_G(h)=1$, then for any continuous equivariant  $f: M^n\to {\Bbb R}^n$ the  zero set $Z_f$ is not empty.
\end{theorem}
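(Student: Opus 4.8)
The plan is to argue by contradiction and reduce the whole statement to Lemma \ref{lm5}. Suppose, contrary to the claim, that there is a continuous equivariant $f:M^n\to{\Bbb R}^n$ with $Z_f=\emptyset$. Since $M$ is compact, $||f||$ attains a positive minimum $\delta>0$. Hence $f$ is (trivially) transversal to zeros in the sense of the definition preceding Lemma \ref{lm5}: $Z_f$ is the empty $0$-dimensional manifold (here $m=n$), and for every $\varepsilon$ with $0<\varepsilon<\delta$ and every $v\in O_\varepsilon$ the set $f^{-1}(v)$ is empty, hence homeomorphic to $Z_f$. In particular $|Z_f|=0\cdot|G|$ with $k=0$ even, so $\deg_G(f)=0$.

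On the other hand, by hypothesis there is a closed PL free $G$-manifold $N^n$ that is free $G$-cobordant to $M^n$; I would fix a free $G$-cobordism $(W^{n+1},M^n,N^n)$, together with the given continuous equivariant transversal-to-zeros map $h:N^n\to{\Bbb R}^n$ with $\deg_G(h)=1$. Applying Lemma \ref{lm5} to the cobordism $(W,M,N)$ with $h_0=f$ and $h_1=h$ yields $\deg_G(f)=\deg_G(h)$, i.e. $0=1$, a contradiction. Therefore no such zero-free $f$ exists, and every continuous equivariant $f:M^n\to{\Bbb R}^n$ has $Z_f\ne\emptyset$.

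I do not expect any genuine obstacle here: all the substantive work has already been carried out in Lemma \ref{lm4} (producing the free $G$-cobordism $(Z_F,Z_{f_0},Z_{f_1})$) and in Lemma \ref{lm5} (the parity count of the arcs of $Z_F$ under the free $G$-action). The single point deserving a line of care is the observation that a zero-free equivariant map legitimately qualifies as ``transversal to zeros'' with $\deg_G=0$. If one prefers to avoid even that, an equivalent route is to use Lemma \ref{lemma:lemma1} and Lemma \ref{lm3} to replace $f$ by a nearby equivariant simplicial generic map $\bar f$ with $||f-\bar f||<\delta$; then $Z_{\bar f}=\emptyset$ as well, so $\deg_G(\bar f)=0$, and feeding $\bar f$ and a generic approximation of $h$ directly into Lemma \ref{lm4} gives the same contradiction via the arc-counting argument of Lemma \ref{lm5}. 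Either way the argument closes.
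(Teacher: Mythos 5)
Your proposal is correct and follows essentially the same route as the paper: assume $Z_f=\emptyset$, observe $\deg_G=0$ for a zero-free map, and contradict $\deg_G(h)=1$ via the cobordism invariance of $\deg_G$ (Lemma \ref{lm5}). The only cosmetic difference is that the paper first replaces $f$ by a nearby generic map $\tilde f$ (via Lemma \ref{lm3}) before invoking Lemma \ref{lm5}, whereas you note that the zero-free $f$ itself already qualifies as transversal to zeros with $\deg_G(f)=0$ --- and you offer the paper's approximation route as a fallback, so the argument is sound either way.
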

\begin{proof} Let $\deg_G(h)=1$.
 Suppose that $Z_f=\emptyset$.  Since $M$ is compact, there is an $\varepsilon>0$ such that
$||f(x)||\ge \varepsilon$ for all $x\in M$.
From Lemma \ref{lm3} it follows that there exists a generic $\tilde f$ such that $||f(x)-\tilde f(x)||\le \varepsilon/2$ for all $x\in M$.
Then $||\tilde f(x)||\ge \varepsilon/2$ for all $x\in M$ and $Z_{\tilde f}=\emptyset$. Therefore, $\deg_G(\tilde f)=0$. On the other hand, Lemma \ref{lm5}
implies $0=\deg_G(\tilde f)=\deg_G(h)=1$, a contradiction.
\end{proof}

\noindent{\bf Remark.} Actually, Lemma \ref{lm5} immediately implies that the assumption in the theorem: {\it There is a free $G$-manifold $N^n$ which is free $G$-cobordant to $M^n$ and a  continuous equivariant  transversal to zeros  $h: N^n\to {\Bbb R}^n$  with $\deg_G(h)=1$}  is equivalent to the following statement: {\it There is a  continuous equivariant  transversal to zeros map $h: M^n\to {\Bbb R}^n$  with $\deg_G(h)=1$.} However, since the assumption in the theorem is more general, it sometimes can be more easily checked for $N$ which are free $G$-cobordant to $M$. (For instance, see our proof of Theorem \ref{thmBUT}.)

\section{BUT manifolds}

 In this section, we consider the classical case $G={\Bbb Z}_2$. Let  $M$ be a closed PL-manifold  with a free simplicial involution $T:M\to M$, i.e. $T^2(x)=x$ and $T(x)\ne x$ for all $x\in M$. For any ${\Bbb Z}_2$-manifold $(M,T)$ we say that a map $f:M^m \to {\Bbb R}^n$  is {\it antipodal} (or equivariant) if $f(T(x))=-f(x)$.

\medskip

\noindent{\bf Definition.}
We say that a closed PL free ${\Bbb Z}_2$-manifold $(M^n,T)$ is a {\it BUT (Borsuk-Ulam Type) manifold} if for any continuous  $g:M^n \to {\Bbb R}^n$ there is a point $x\in M$ such that $g(T(x))=g(x)$. Equivalently, if a continuous  map $f:M^n \to {\Bbb R}^n$  is { antipodal},  then  $Z_f$ is not empty.

\medskip

Let us recall several  facts about ${\Bbb Z}_2$-cobordisms which are related to our main theorem in this section.
 We write ${\mathfrak N}_n$ for the group of unoriented cobordism classes of $n$-dimensional manifolds.    Thom's cobordism theorem says that
the graded ring of cobordism classes  ${\mathfrak N}_*$ is
${\Bbb Z}_2[x_2, x_4, x_5, x_6, x_8, x_9,\ldots]$ with one generator $x_k$ in each degree $k$ not of the form $2^i - 1$.
Note that $x_{2k}=[{\Bbb R}{\Bbb P}^{2k}]$.

Let  ${\mathfrak N}_*({\Bbb Z}_2)$ denote the unoriented cobordism group of  free involutions. Then ${\mathfrak N}_*({\Bbb Z}_2)$ is a free ${\mathfrak N}_*$-module with basis $[{\Bbb S}^n,A]$, $n\ge0$, where $[{\Bbb S}^n,A]$ is the cobordism class of the antipodal involution on the $n$-sphere \cite[Theorem 23.2]{CF}. Thus,  each ${\Bbb Z}_2$-manifold can be uniquely represented in ${\mathfrak N}_n({\Bbb Z}_2)$  in the form:
$$
[M,T]=\sum\limits_{k=0}^n {[V^k][{\Bbb S}^{n-k},A]}.
$$

\begin{theorem} \label{thmBUT} Let $M^n$ be a closed connected PL-manifold with a free simplicial involution $T$. Then the following statements are equivalent:

\noindent (a) $M$ is a BUT manifold.

\noindent (b)  $M$ admits an antipodal continuous transversal to zeros map $h:M^n \to {\Bbb R}^n$ with $\deg_{{\Bbb Z}_2}(h)=1$.

\noindent (c) $[M^n,T]=[{\Bbb S}^n,A]+[V^1][{\Bbb S}^{n-1},A]+\ldots+[V^n][{\Bbb S}^0,A]$ in ${\mathfrak N}_n({\Bbb Z}_2)$.
\end{theorem}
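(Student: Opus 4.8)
The plan is to prove the cyclic chain of implications $(a)\Rightarrow(c)\Rightarrow(b)\Rightarrow(a)$, using Theorem \ref{thmdeg} for the last step and the structure of ${\mathfrak N}_*({\Bbb Z}_2)$ as a free ${\mathfrak N}_*$-module for the first two.

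\textbf{The implication $(b)\Rightarrow(a)$.} This is essentially immediate from Theorem \ref{thmdeg} applied with $G={\Bbb Z}_2$ and $N=M$: if there is an antipodal transversal-to-zeros $h:M^n\to{\Bbb R}^n$ with $\deg_{{\Bbb Z}_2}(h)=1$, then for every antipodal continuous $f:M^n\to{\Bbb R}^n$ the zero set $Z_f$ is nonempty, which is exactly the BUT property.

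\textbf{The implication $(a)\Rightarrow(c)$.} I would argue by contrapositive: suppose the coefficient of $[{\Bbb S}^n,A]$ in the expansion $[M,T]=\sum_{k=0}^n[V^k][{\Bbb S}^{n-k},A]$ is $[V^0]=0\in{\mathfrak N}_0={\Bbb Z}_2$ (note $[V^0]$ is either $0$ or $[\mathrm{pt}]$). Then $[M,T]=\sum_{k\ge1}[V^k][{\Bbb S}^{n-k},A]$. The basis class $[{\Bbb S}^{n-k},A]$ for $k\ge1$ is represented by the antipodal sphere, and multiplying by a representative $V^k$ of the ${\mathfrak N}_*$-coefficient gives a free ${\Bbb Z}_2$-manifold $V^k\times{\Bbb S}^{n-k}$ with the free involution $\mathrm{id}\times A$; this manifold admits an antipodal map to ${\Bbb R}^{n-k}\subset{\Bbb R}^n$ (projection to the sphere factor followed by the standard antipodal projection ${\Bbb S}^{n-k}\to{\Bbb R}^{n-k}$) which has \emph{empty} zero set, since the ${\Bbb S}^{n-k}\to{\Bbb R}^{n-k}$ standard projection composed with nothing vanishing on the $V^k$ factor never hits $0$ — more precisely one uses that ${\Bbb S}^{n-k}$ maps antipodally to ${\Bbb R}^{n-k}$ with exactly two zeros, but after crossing with $V^k$ and composing suitably one produces a nowhere-zero antipodal map, so $V^k\times{\Bbb S}^{n-k}$ is \emph{not} BUT. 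A disjoint union (and hence, via the cobordism, $M$ itself up to $G$-cobordism) of non-BUT pieces is non-BUT: here is where I invoke Lemma \ref{lm5}, which says $\deg_{{\Bbb Z}_2}$ is a free $G$-cobordism invariant, so $M$ being $G$-cobordant to a manifold admitting a nowhere-zero antipodal map forces $\deg_{{\Bbb Z}_2}(h)=0$ for every transversal-to-zeros antipodal $h$ on $M$, hence $Z_h$ can be empty, hence by the approximation argument of Theorem \ref{thmdeg} $M$ is not BUT. The cleanest formulation: define a homomorphism ${\mathfrak N}_n({\Bbb Z}_2)\to{\Bbb Z}_2$ sending $[M,T]\mapsto\deg_{{\Bbb Z}_2}(h)$ (well-defined by Lemma \ref{lm5} and independent of the choice of generic $h$), compute it on the basis to get $[{\Bbb S}^{n-k},A]\mapsto\delta_{k,0}$ and $[V^k]\cdot(-)$ killed for $k\ge1$, and read off that this homomorphism is nonzero on $[M,T]$ iff the $[{\Bbb S}^n,A]$-coefficient is $1$.

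\textbf{The implication $(c)\Rightarrow(b)$.} Assuming the $[{\Bbb S}^n,A]$-coefficient equals $1$, I want to produce an antipodal transversal-to-zeros $h$ on $M$ with $\deg_{{\Bbb Z}_2}(h)=1$. By the module-basis decomposition, $[M,T]$ and $[{\Bbb S}^n,A]+\sum_{k\ge1}[V^k][{\Bbb S}^{n-k},A]$ agree, so $M$ is ${\Bbb Z}_2$-cobordant to the disjoint union $N$ of ${\Bbb S}^n$ (antipodal) with the pieces $V^k\times{\Bbb S}^{n-k}$, $k\ge1$. On ${\Bbb S}^n$ the standard orthogonal projection to ${\Bbb R}^n$ is antipodal, transversal to zeros, with exactly $2$ zeros, so $\deg_{{\Bbb Z}_2}=1$ on that summand; on each $V^k\times{\Bbb S}^{n-k}$, $k\ge1$, the map constructed above is antipodal with empty zero set, so $\deg_{{\Bbb Z}_2}=0$ there. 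Hence $N$ admits an antipodal transversal-to-zeros map with $\deg_{{\Bbb Z}_2}=1$, so by the Remark following Theorem \ref{thmdeg} (equivalence of the two forms of the hypothesis, via Lemma \ref{lm5}) $M$ itself admits such an $h$, giving (b).

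\textbf{Main obstacle.} The routine parts are the approximation/transversality bookkeeping, already packaged into Lemmas \ref{lm3}--\ref{lm5} and Theorem \ref{thmdeg}. The real content, and the step I expect to require the most care, is the explicit construction on the basis elements $V^k\times{\Bbb S}^{n-k}$ for $k\ge1$ of an \emph{antipodal} map to ${\Bbb R}^n$ (with the prescribed linear ${\Bbb Z}_2$-action $x\mapsto-x$) that is transversal to zeros with empty zero set — equivalently, verifying that these manifolds are genuinely not BUT — and dually checking that the antipodal sphere contributes $\deg_{{\Bbb Z}_2}=1$; once the induced map ${\mathfrak N}_n({\Bbb Z}_2)\to{\Bbb Z}_2$ is shown to be a well-defined homomorphism and computed on the free basis, the equivalence of (a), (b), (c) falls out formally.
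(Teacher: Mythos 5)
Your implications $(c)\Rightarrow(b)$ and $(b)\Rightarrow(a)$ are essentially the paper's argument: the paper also takes $N=V^1\times{\Bbb S}^{n-1}\sqcup\ldots\sqcup V^n\times{\Bbb S}^0$ and $L={\Bbb S}^n\sqcup N$, uses Lemma \ref{lm5} to transfer $\deg_{{\Bbb Z}_2}$ across the ${\Bbb Z}_2$-cobordism, and invokes Theorem \ref{thmdeg} for $(b)\Rightarrow(a)$. (One small repair there: the nowhere-zero antipodal map on $V^k\times{\Bbb S}^{n-k}$, $k\ge1$, is projection to the sphere factor followed by the standard \emph{embedding} ${\Bbb S}^{n-k}\hookrightarrow{\Bbb R}^{n-k+1}\subset{\Bbb R}^n$, which misses the origin because points of the unit sphere have norm $1$; the orthogonal projection ${\Bbb S}^{n-k}\to{\Bbb R}^{n-k}$ that you first write down has two zeros, so after crossing with $V^k$ its zero set is $V^k\times\{\pm e\}\ne\emptyset$.)

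The genuine gap is in $(a)\Rightarrow(c)$. From $[V^0]=0$ and Lemma \ref{lm5} you correctly get that \emph{every} antipodal transversal-to-zeros $h:M\to{\Bbb R}^n$ has $\deg_{{\Bbb Z}_2}(h)=0$, i.e.\ $|Z_h|\equiv 0 \pmod 4$. But your next step, ``hence $Z_h$ can be empty, hence $M$ is not BUT,'' is a non sequitur: degree zero only constrains the count mod $4$; it does not produce an antipodal map on $M$ itself with empty zero set, and the BUT property is not obviously a cobordism invariant --- that is exactly the content of this direction. A priori every generic antipodal map on $M$ could have, say, four zeros, with $\deg_{{\Bbb Z}_2}=0$, and $M$ could still be BUT; nothing in Lemmas \ref{lm3}--\ref{lm5} or Theorem \ref{thmdeg} rules this out, since they only give one-way conclusions from $\deg=1$. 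The paper closes this gap with an external input: by \cite[Theorem 3]{RS} (obstruction theory for the quotient), if $w_1^n(M/{\Bbb Z}_2)=0$ in $H^n(M/{\Bbb Z}_2,{\Bbb Z}_2)$ then there exists an antipodal continuous map $M^n\to{\Bbb S}^{n-1}$, i.e.\ an antipodal map to ${\Bbb R}^n$ with no zeros, so $M$ is not BUT; and $w_1^n(M/{\Bbb Z}_2)\ne0$ holds if and only if the coefficient $[V^0]$ equals $1$, which is (c). Your proposal needs this (or an equivalent homotopy-theoretic argument) to complete $(a)\Rightarrow(c)$; the purely cobordism-theoretic machinery you use proves only $(b)\Leftrightarrow(c)$ and $(b)\Rightarrow(a)$.
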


\begin{proof} {\bf (1)} First we prove that {\it (c) is equivalent to (b).} Let
$$[M,T]=[V^0][{\Bbb S}^n,A]+[V^1][{\Bbb S}^{n-1},A]+\ldots+[V^n][{\Bbb S}^0,A].$$
Denote by
$$N:=V^1\times{\Bbb S}^{n-1}\sqcup\ldots\sqcup V^n\times{\Bbb S}^{0}.$$

Note that for any $k$, where $0\le k<n$, the standard antipodal embedding of ${\Bbb S}^{k}$ into ${\Bbb R}^{n}$ has no zeros. This implies that there is a generic antipodal $p: N^n\to {\Bbb R}^n$  with $Z_p=\emptyset$. If $[V^0]=0$, then $M$ is free 
${\Bbb Z}_2$-cobordant to $N$. Therefore, Lemma \ref{lm5} yields that for any generic antipodal $f:M\to {\Bbb R}^n$  we have $\deg_{{\Bbb Z}_2}(f)=\deg_{{\Bbb Z}_2}(p)=0$. This contradicts (b), and thus $[V^0]=1\in {\Bbb Z}_2$.

On the other hand, if $[V^0]=1$, then $M$ is free ${\Bbb Z}_2$-cobordant to $L:={\Bbb S}^n\sqcup N$. Take any generic antipodal $h: M^n\to {\Bbb R}^n$. Since for any generic antipodal $g:L\to {\Bbb R}^n$ we have $\deg_{{\Bbb Z}_2}(g)=1$, it again follows from Lemma \ref{lm5} that $\deg_{{\Bbb Z}_2}(h)=1$.

\medskip

\noindent {\bf (2)} Theorem \ref{thmdeg} yields: {\it (b) implies (a).}

\medskip

\noindent {\bf (3)}  {\it (a) implies (c).} In fact, it follows from \cite[Theorem 3]{RS}.\footnote{I would to thank Alexey Volovikov who noted this theorem. He as well as Pavle Blagojevi\'c and Roman Karasev also sent me another proof of \cite[Theorem 3]{RS}.} 
This theorem yields that if $w_1^n(M/{\Bbb Z}_2)=0\in H^n(M/{\Bbb Z}_2,{\Bbb Z}_2)$, then there is an antipodal continuous map $h:M^n\to{\Bbb S}^{n-1}$, i.e. $Z_h=\emptyset.$ Therefore, $w_1^n(M/{\Bbb Z}_2)\ne0$. It holds if and only if we have (c). 
\end{proof}

\noindent{\bf Remark.} Actually, the list of equivalent versions in Theorem \ref{thmBUT} can be continued. {Tucker's lemma} is a discrete version of the Borsuk-Ulam theorem:\\
{\it Let $\Lambda$ be any equivariant triangulation  of ${\Bbb S}^n$. Let $$L:V(\Lambda)\to \{+1,-1,+2,-2,\ldots, +n,-n\}$$ be an equivariant (or Tucker) labelling, i.e.   $L(T(v))=-L(v)$). Then there exists a {complementary edge}
 in $\Lambda$ such that its two vertices are labelled by opposite numbers} (see \cite[Theorem 2.3.1]{Mat}).

 Let $M$ be as in Theorem \ref{thmBUT}. Using the same arguments as in \cite[Theorem 2.3.2]{Mat}, it can be proved that the following statement is equivalent to (a):

\medskip

\noindent {\it (d) For any equivariant labelling of an equivariant triangulation of $M$ there is a complementary edge.}

\medskip

In fact, any equivariant labelling $L$  defines a simplicial map $f_L:M\to {\Bbb R}^n$.
Indeed, let $e_1,\ldots, e_n$ be an orthonormal basis of ${\Bbb R}^n$. We define  $f_L:\Lambda\to {\Bbb R}^n$ for $v\in V(\Lambda)$ by $f_L(v)=e_i$ if $L(v)=i$ and  $f_L(v)=-e_i$ if $L(v)=-i$. In the paper \cite{Mus}, it is shown that $M$ is a BUT manifold if and only if

\medskip

\noindent {\it (e)  There exist an equivariant triangulation $\Lambda$ of $M$ and an equivariant labelling of $V(\Lambda)$  such that $f_L:\Lambda\to {\Bbb R}^n$ is transversal to zeros and the number of complementary edges is  $4k+2$, where $k=0,1,2,\ldots$}

\medskip

Lyusternik and Shnirelman proved in 1930 that for
any cover $F_1,\ldots,F_{n+1}$ of the sphere ${\Bbb S}^n$ by $n+1$ closed sets, there
is at least one set containing a pair of antipodal points (that is,
$F_i\cap(-F_{i})\ne\emptyset$). Equivalently, for any cover $U_1,\ldots,U_{n+1}$ of ${\Bbb S}^n$ by $n+1$ open sets, there is at least one set containing a pair of antipodal points \cite[Theorem 2.1.1]{Mat}. By the same arguments it can be shown that $M$ is a BUT manifold if and only if

\medskip

\noindent{\it (f) $M$ is a Lusternik-Shnirelman type manifold, i.e.  for
any cover $F_1,\ldots,F_{n+1}$ of  $M^n$ by $n+1$ closed (respectively,  by  $n+1$ open) sets, there is at least one set containing a pair $(x,T(x))$.}

\medskip

Denote by $\cat(X)$  the Lusternik-Shnirelman category of a space $X$, i.e. the smallest $m$ such that there exists an open covering $U_1,\ldots,U_{m+1}$ of $X$ with each $U_i$ contractible to a point in $X$. It is not hard to prove the last statement in this Remark:

\medskip

\noindent {\it (g)  $M$ is a BUT manifold if and only if $\cat(M/T)=\cat({\Bbb R}{\Bbb P}^n)=n$.}

\medskip

Usually, it is not easy to find  $\cat(X)$. Here, Theorems 2-4 yield interesting possibilities to find lower bounds for $\cat(M/G)$.

\medskip

\begin{cor} \label{corMcM} Let $M$ be any closed manifold. Then for the connected sum $M\# M$ there  exists a ``centrally symmetric'' free involution such that $M\# M$ is a BUT manifold.
\end{cor}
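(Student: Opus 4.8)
The plan is to realize $M\#M$ as a ``centrally symmetric'' submanifold of some Euclidean space and then invoke Theorem~\ref{thmBUT}, in the form ``(c) $\Rightarrow$ (a)'', so that everything reduces to computing the free ${\Bbb Z}_2$-cobordism class $[M\#M,T]$ and checking that the coefficient of $[{\Bbb S}^n,A]$ is $1$. First I would construct the involution: embed $M$ in ${\Bbb R}^N$, take a small disk $D\subset M$, and form $M\#M$ by removing $D$ from a copy of $M$ placed near a point $p$ and removing $-D$ from the antipodally reflected copy near $-p$, gluing the two copies along the connecting tube $S^{n-1}\times[-1,1]$ in a way that is symmetric under $x\mapsto -x$. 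Concretely, one chooses the embedding so that $A(x)=-x$ carries the first punctured copy onto the second and acts on the tube by $(y,t)\mapsto(y,-t)$; since the two copies of $M$ are disjoint and located near $p$ and $-p\ne p$, and the tube is crossed freely in the $t$-coordinate, this $T:=A|_{M\#M}$ is a free simplicial involution (after an equivariant triangulation, which exists by the discussion in Section~2). This is essentially the ``centrally symmetric'' picture already sketched in the introduction for $M^2_g$ and $P^2_m$.

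Next I would identify the cobordism class. The key observation is that $M\#M$ with this free involution is precisely the boundary of a natural free ${\Bbb Z}_2$-manifold: let $W$ be $(M\setminus D)\times[-1,1]$ with ${\Bbb Z}_2$ acting by $(x,s)\mapsto(x,-s)$ — no, more carefully, one wants a free $G$-cobordism from $M\#M$ to $S^n$ with its antipodal involution. The cleanest route: show $[M\#M,T]=[S^n,A]$ in ${\mathfrak N}_n({\Bbb Z}_2)$, i.e. all the higher coefficients $[V^k]$ with $k\ge 1$ vanish and $[V^0]=1$. For this I would use that the double $M\#M$ bounds: the manifold $M\setminus\mathring{D}$ is a manifold with boundary $S^{n-1}$, and $M\#M$ is the double of $M\setminus\mathring{D}$ along that boundary sphere. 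The free involution $T$ swaps the two halves. A free ${\Bbb Z}_2$-manifold that is the ``double with swap'' of $(Y,\partial Y)$ along $\partial Y$ is equivariantly cobordant, via $(Y\times[0,1])$-type constructions, to the free involution on $S^n$ obtained by doubling the standard $D^n$ — which is exactly $(S^n,A)$. So one exhibits an explicit free $G$-cobordism $(\mathcal W, M\#M, S^n)$ built from $(M\setminus\mathring D)\times[0,1]$ together with a collar, with the involution swapping a copy and its mirror. Then $[M\#M,T]=[S^n,A]$, which is the special case of (c) with all $[V^k]=0$ for $k\ge1$; in particular condition (c) holds, hence by Theorem~\ref{thmBUT} the manifold is BUT.

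The main obstacle I expect is making the ``double bounds a cobordism to $(S^n,A)$'' argument precise and genuinely equivariant: one must check that the gluing region is handled so that the cobordism $\mathcal W$ is a free $G$-manifold (no new fixed points are introduced at the seam), that its boundary is exactly $M\#M\sqcup S^n$ with the asserted involutions, and that $T$ on $M\#M$ really is free — the delicate point being the behavior near the connecting tube, where the two copies of $M$ come closest. An alternative, perhaps safer, way to finish: instead of proving $[M\#M,T]=[S^n,A]$ on the nose, directly construct a generic antipodal $h:M\#M\to{\Bbb R}^n$ with $\deg_{{\Bbb Z}_2}(h)=1$ (establishing (b)), by taking a centrally symmetric projection of the embedded $M\#M\subset{\Bbb R}^k$ onto a generic $n$-plane through the origin and counting zeros; by the symmetric construction the two ``caps'' near $\pm p$ each contribute generically, and arranging a single transverse zero-orbit gives $|Z_h|=2$, i.e. $\deg_{{\Bbb Z}_2}(h)=1$. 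Either way the structural content is the same, and I would present whichever of the two makes the equivariance bookkeeping cleanest.
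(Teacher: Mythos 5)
Your main route has a genuine gap, concentrated exactly at the point you flagged. First, the involution you write down is not free: the ``double with swap'' of $Y=M\setminus\mathring D$ fixes the seam sphere $\partial Y$ pointwise, and your tube formula $(y,t)\mapsto(y,-t)$ fixes the whole sphere $\{t=0\}$. The involution that works (and the one the central symmetry $A(x)=-x$ actually induces) swaps the two halves and acts \emph{antipodally} on the seam, i.e. $(y,t)\mapsto(-y,-t)$ on the tube. Second, and more seriously, your key claim $[M\#M,T]=[{\Bbb S}^n,A]$ in ${\mathfrak N}_n({\Bbb Z}_2)$ is false in general, so the free cobordism $(\mathcal W,\,M\#M,\,{\Bbb S}^n)$ you propose to build cannot exist. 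The quotient of $M\#M$ by this involution is $M\#{\Bbb R}{\Bbb P}^n$; a free equivariant cobordism to $({\Bbb S}^n,A)$ would descend to a cobordism between $M\#{\Bbb R}{\Bbb P}^n$ and ${\Bbb R}{\Bbb P}^n$, which (Stiefel--Whitney numbers being additive under connected sum) forces $M$ to be null-cobordant. For $M={\Bbb R}{\Bbb P}^2$ the quotient is the Klein bottle, which bounds while ${\Bbb R}{\Bbb P}^2$ does not; in fact one computes $[M\#M,T]=[{\Bbb S}^2,A]+[{\Bbb R}{\Bbb P}^2]\,[{\Bbb S}^0,A]$. Condition (c) of Theorem \ref{thmBUT} only asks that the coefficient of $[{\Bbb S}^n,A]$ be $1$ --- the higher coefficients need not vanish --- and your cobordism argument, as it stands, does not establish even that weaker fact.

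Your fallback route is the correct one and is essentially the paper's proof: it embeds $M$ in ${\Bbb R}^{2n}$ so that the half $M_-$ is a standard open $n$-ball bounded by a round sphere $M_0$, sets $X=s(M_+)\cup M_0\cup M_+$ with $s(x)=-x$ (so $s$ is free precisely because it is antipodal on $M_0$), and takes $h$ to be the orthogonal projection onto the coordinate $n$-plane; then every zero of $h$ lies on $M_0$, where $h$ restricts to the standard projection of a round sphere, so $|Z_h|=2$, $\deg_{{\Bbb Z}_2}(h)=1$, and Theorem \ref{thmBUT}(b) applies. In your version this step is only gestured at (``arranging a single transverse zero-orbit''); the actual content is in normalizing the embedding so that the zero set is visibly two points, which is what the paper's choice of $M_-$ and $M_0$ accomplishes. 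If you rewrite the argument along those lines, dropping the false identity $[M\#M,T]=[{\Bbb S}^n,A]$, the proof goes through.
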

\begin{proof}
The Whitney embedding theorem states that any smooth or simplicial $n$-dimensional manifold can be embedded in Euclidean $2n$-space. Consider an embedding of $M$ in ${\Bbb R}^{2n}$ with coordinates $(x_1,\ldots,x_{2n})$. Let $M_-$ (respectively, $M_+$) denote the set of points in $M\subset {\Bbb R}^{2n}$ with $x_1<0$ (respectively, $x_1>0$). Let $M_0$ be the set of points in $M$ with $x_1=0$.
Without loss of generality we can assume that $M$ is embedded in ${\Bbb R}^{2n}$ in such a way that $M_-$ is homeomorphic to an open $n$-ball
and $M_0$ is  a sphere $x_1^2+\ldots+x_{n+1}^2=1$ with  $x_{n+2}=\ldots=x_{2n}=0$. Then the central symmetry $s$ $(s(x)=-x)$ is well defined on
$X:=s(M_+)\bigcup M_0\bigcup M_+$ as a free involution, and $X$ is homeomorphic to $M\# M$.  Consider the projection $h$ of $X$ onto the $n$-plane $x_{n+1}=\ldots=x_{2n}=0$, i.e. $h(x_1,\ldots, x_{2n})=(x_1,\ldots,x_n)$.  Since $Z_h\subset M_0$, we have $Z_h=Z_t$, where $t:=h|_{M_0}$. On the other hand, $t:M_0\to{\Bbb R}^n$ is an orthogonal projection of ${\Bbb S}^n$. Thus, $|Z_h|=2$  and Theorem \ref{thmBUT}(b) implies that $M\#M$ is a BUT manifold.
\end{proof}


\section{Obstructions  for $G$-maps in cobordisms}

In the previous sections, we considered equivariant maps $f:M^m \to {\Bbb R}^n$ with $m=n$. Now we extend this approach to the case $m\ge n$.


Consider closed PL manifolds with an $H$-structure (such as an orientation). One can define a ``cobordism with $H$-structure'', but there are various technicalities. In each particular case, cobordism is an equivalence relation on manifolds. A basic question is to determine the equivalence classes for this relationship, called the cobordism classes of manifolds. These form a graded ring called the cobordism ring $\Omega^H_*$, with grading by dimension, addition by disjoint union, and multiplication by cartesian product.

Let  ${\Omega}_*^H(G)$ denote the PL cobordism group with $H$-structure of  free simplicial actions of a finite group $G$. Let $\rho:G\to \gl(n,{\Bbb R})$ be a representation of a group $G$ on ${\Bbb R}^n$ which also has $H$-structure. Lemma \ref{lm4} shows  that for a generic simplicial equivariant  map  $f: M^m\to{\Bbb R}^n$ the cobordism class of the manifold $Z_f$ is uniquely defined up to cobordisms and so well defines a homomorphism
$$
\mu_{\rho}^G:\Omega_m^H(G)\to \Omega_{m-n}^H(G).
$$

\noindent{\bf Remark.} Note that the homomorphism $\mu_{\rho}^G:\Omega_m^H(G)\to \Omega_{m-n}^H(G)$ depends only on  a representation of a group $G$ on ${\Bbb R}^n$. For some groups, this homomorphism is known in algebraic topology. For instance, if $G={\Bbb Z}_2$ and $H=O$ (unoriented cobordisms, i.e. $\Omega_*^O({\Bbb Z}_2)={\mathfrak N}_*({\Bbb Z}_2)$), then $\mu=\Delta^k$, where $\Delta$ is called the {\it Smith homomorphism}. Conner and Floyd \cite[Theorem 26.1]{CF} give the following definition of $\Delta$: {\it Let $T$ be a free involution on a closed manifold $M$. For $n\ge m$ there exists an antipodal generic map $f:M^m\to{\Bbb S}^n$.
Let $X^{m-1}=f^{-1}({\Bbb S}^{n-1})$. Then the map $\Delta:\Omega_m^O({\Bbb Z}_2)\to \Omega_{m-1}^O({\Bbb Z}_2)$ which is defined by $[M^m,T]\to [X^{m-1},T|_{X}]$ is a homomorphism and $\Delta$ does not depend on $n$ and $f$.}


\medskip

The invariant $\mu_{\rho}^G$ is an obstruction for the existence of equivariant maps
$f: M\to {\Bbb R}^n\setminus \{0\}$. Namely, we have the following theorem.

\begin{theorem} \label{thmGcb} Let $M^m$ be a closed PL $G$-manifold with a free action $\tau$. Let $\rho$ be a  linear action of $G$ on ${\Bbb R}^n$. Let us assume that actions, manifolds, and maps  are with $H$-structure. Suppose that $\mu_{\rho}^G([M,\tau])\ne 0$ in $\Omega_{m-n}^H(G).$ Then for any continuous equivariant map $f: M^m\to {\Bbb R}^n$ the set of zeros $Z_f$ is not empty.
\end{theorem}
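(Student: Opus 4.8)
The plan is to argue by contradiction, mimicking the proof of Theorem \ref{thmdeg} but at the level of cobordism classes rather than the mod-$2|G|$ count of zeros. Suppose that there exists a continuous equivariant $f: M^m \to {\Bbb R}^n$ with $Z_f = \emptyset$. Since $M$ is compact and $f$ is continuous, there is an $\varepsilon > 0$ with $\|f(x)\| \ge \varepsilon$ for all $x \in M$. By Lemma \ref{lemma:lemma1} together with Lemma \ref{lm3}, we may approximate $f$ by an equivariant simplicial generic map $\tilde f: M^m \to {\Bbb R}^n$ with $\|f(x) - \tilde f(x)\| \le \varepsilon/2$ for all $x$; then $\|\tilde f(x)\| \ge \varepsilon/2 > 0$ everywhere, so $Z_{\tilde f} = \emptyset$ as well. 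Hence the manifold $Z_{\tilde f}$ is the empty $(m-n)$-manifold, whose cobordism class in $\Omega_{m-n}^H(G)$ is $0$.

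The next step is to invoke the well-definedness of $\mu_\rho^G$. Recall that $\mu_\rho^G([M,\tau])$ is defined by choosing \emph{any} equivariant simplicial generic map $g: M^m \to {\Bbb R}^n$ and setting $\mu_\rho^G([M,\tau]) = [Z_g] \in \Omega_{m-n}^H(G)$; Lemma \ref{lm4} (via the generic cobordism $(Z_F, Z_{g_0}, Z_{g_1})$ built over a product cobordism $M \times [0,1]$, respecting the $H$-structure) guarantees this class is independent of the choice of generic map. Applying this with $g = \tilde f$ gives
\[
\mu_\rho^G([M,\tau]) = [Z_{\tilde f}] = [\emptyset] = 0 \in \Omega_{m-n}^H(G),
\]
which contradicts the hypothesis $\mu_\rho^G([M,\tau]) \ne 0$. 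Therefore $Z_f$ cannot be empty, proving the theorem.

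I expect the main subtlety to be bookkeeping for the $H$-structure: one must check that the approximation lemmas (Lemma \ref{lemma:lemma1}, Lemma \ref{lm3}) and the cobordism lemma (Lemma \ref{lm4}) can be carried out compatibly with whatever $H$-structure is in play (orientation, spin, etc.), and that $Z_{\tilde f}$ inherits a canonical induced $H$-structure from $M$ and $\rho$ so that $[Z_{\tilde f}]$ is a legitimate element of $\Omega_{m-n}^H(G)$. For the unoriented case $H = O$ there is nothing to check, and for the standard structures the induced structure on a transverse preimage is classical; I would simply remark that this is exactly the content of the discussion preceding the statement (which defines $\mu_\rho^G$ as a homomorphism using precisely this induced structure), so the contradiction goes through verbatim once that machinery is granted. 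The only other point to note is that the generic approximant $\tilde f$ of the nonvanishing $f$ is itself automatically transversal to zeros with $Z_{\tilde f} = \emptyset$, so no separate genericity argument on the empty set is needed.
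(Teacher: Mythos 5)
Your proof is correct and follows essentially the same route as the paper: approximate the hypothetical nonvanishing equivariant $f$ by a generic equivariant simplicial map via Lemmas \ref{lemma:lemma1} and \ref{lm3}, and use Lemma \ref{lm4} (the well-definedness of $\mu_{\rho}^G$) to derive a contradiction with $\mu_{\rho}^G([M,\tau])\ne 0$. The only cosmetic difference is the direction of the final step — the paper deduces $Z_h\ne\emptyset$ from $\mu_{\rho}^G\ne 0$ and contradicts the approximation inequality, while you deduce $\mu_{\rho}^G=[\emptyset]=0$ from the nonvanishing approximant and contradict the hypothesis — which is logically the same argument.
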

\begin{proof} Actually the proof of the theorem and of the corollary are almost word for word the same as the proof of Theorem \ref{thmdeg}.
Let us suppose that $f^{-1}(0)=\emptyset$.  Since $M$ is compact, there is an $\varepsilon>0$ such that
$||f(x)||\ge \varepsilon$ for all $x\in M$.
From Lemma 1 and Lemma \ref{lm3}, it follows that there exists a generic $h$ such that $||f(x)-h(x)||\le \varepsilon/2$ for all $x\in M$. By Lemma \ref{lm4} we have
$\mu_{\rho}^G([M,\tau])=[Z_h]_G\ne 0$ in $\Omega_{m-n}^H(G)$. Then $Z_h$ is a submanifold of $M$ of dimension $m-n$.

Since $Z_h\ne\emptyset$,  there is $x\in M$ such that $h(x)=0$.
Thus, $\varepsilon/2\ge ||f(x)-h(x)||=||f(x)||\ge \varepsilon>0$, a contradiction.
\end{proof}

\begin{cor} \label{cor2}  Let manifolds, actions, and maps be as above.  Suppose that there is a  continuous equivariant transversal (in zeros) map $h: M^m\to {\Bbb R}^n$ such that the set $Z_h$ is a closed submanifold in $M$ of codimension $n$ and $[Z_h]_G\ne 0$ in $\Omega_{m-n}^H(G).$ Then for any continuous equivariant map $f: M\to {\Bbb R}^n$ the set of zeros $Z_f$ is not empty.
\end{cor}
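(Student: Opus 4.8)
\medskip
\noindent\textbf{Proof proposal.}

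The plan is to reduce the statement to Theorem \ref{thmGcb} by showing that the given map $h$ actually realizes the obstruction, i.e. that $\mu_\rho^G([M,\tau])=[Z_h]_G$ in $\Omega_{m-n}^H(G)$; since this class is assumed to be nonzero, Theorem \ref{thmGcb} then delivers $Z_f\ne\emptyset$ for every continuous equivariant $f:M^m\to{\Bbb R}^n$. Recall that, by Lemma \ref{lm4} applied to the trivial free $G$-cobordism $M\times[0,1]$, any two generic simplicial equivariant maps $M^m\to{\Bbb R}^n$ have free $G$-cobordant zero sets (compatibly with the $H$-structure), so the class $[Z_g]_G\in\Omega_{m-n}^H(G)$ does not depend on the generic simplicial $g$, and this common value is by definition $\mu_\rho^G([M,\tau])$. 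Here the hypothesis that $Z_h$ is a \emph{closed} submanifold is what guarantees that $[Z_h]_G$ is itself a bona fide class in $\Omega_{m-n}^H(G)$.

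First I would use Lemma \ref{lemma:lemma1} and Lemma \ref{lm3} to pick a generic simplicial equivariant map $\tilde h:M^m\to{\Bbb R}^n$ with $\|h(x)-\tilde h(x)\|\le\delta$ for all $x$, where $\delta>0$ is to be chosen small. By the previous paragraph we already have $\mu_\rho^G([M,\tau])=[Z_{\tilde h}]_G$, so the whole matter comes down to proving $[Z_{\tilde h}]_G=[Z_h]_G$ once $\delta$ is small enough. (Alternatively one can avoid naming $\mu$ and run the contradiction argument of Theorem \ref{thmGcb} verbatim: if some equivariant $f$ had $Z_f=\emptyset$, a generic $\tilde f$ with $\|f-\tilde f\|\le\varepsilon/2$ would have $Z_{\tilde f}=\emptyset$, hence $[Z_{\tilde f}]_G=0$, whereas Lemma \ref{lm4} forces $[Z_{\tilde f}]_G=[Z_{\tilde h}]_G=[Z_h]_G\ne 0$.)

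This last identification, $[Z_{\tilde h}]_G=[Z_h]_G$, is the heart of the argument and the step I expect to be the main obstacle; it is exactly the place where the hypothesis ``$h$ is transversal to zeros'' must be used in full force. Read strongly, transversality to zeros means that near the closed submanifold $Z_h$ the map $h$ has a local product model: there is a $G$-invariant neighborhood of $Z_h$ on which $h$ is $G$-equivariantly, and $H$-compatibly, identified with the projection $Z_h\times D^n\to D^n$. Granting this, for $\delta$ smaller than the radius of that model one sees, first, that $Z_{\tilde h}$ lies entirely inside the neighborhood (outside it $\|h\|$ is bounded below, hence $\|\tilde h\|>0$), and second, that on the model $\tilde h$ takes the form $(z,w)\mapsto w+\eta(z,w)$ with $\|\eta\|<\delta$, whose zero set is a graph over $Z_h$; therefore $Z_{\tilde h}$ is $G$-equivariantly and $H$-compatibly homeomorphic to $Z_h$, which gives $[Z_{\tilde h}]_G=[Z_h]_G$. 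In the special case $m=n$ treated in Lemma \ref{lm5} this step collapsed to the trivial remark that two nearby finite $G$-invariant zero sets are in equivariant bijection; for $m>n$ one has to supply (or build into the definition of ``transversal to zeros'') the tubular-neighborhood content sketched above. With that in place, $\mu_\rho^G([M,\tau])=[Z_h]_G\ne0$, and Theorem \ref{thmGcb} finishes the proof.
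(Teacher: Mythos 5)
Your proposal follows essentially the same route as the paper: approximate $h$ by a generic simplicial equivariant map via Lemmas \ref{lemma:lemma1} and \ref{lm3}, argue that for a sufficiently small perturbation the zero set is equivariantly identified with $Z_h$, conclude $\mu_\rho^G([M,\tau])=[Z_h]_G\ne0$, and invoke Theorem \ref{thmGcb}. The only difference is that you spell out the identification $[Z_{\tilde h}]_G=[Z_h]_G$ via a local product reading of ``transversal to zeros,'' a step the paper's proof passes over with the brief assertion that $Z_\varepsilon\to Z_h$ forces a homeomorphism for small $\varepsilon$ --- a reasonable elaboration, not a different argument.
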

\begin{proof}
By Lemma \ref{lm3}, for any $\varepsilon>0$ there is a generic map $a_\varepsilon$ such that $||a_\varepsilon(x)-h(x)||\le \varepsilon$ for all $x\in M$. If $\varepsilon\to0$, then $Z_\varepsilon:=a_\varepsilon^{-1}(0)\to Z$. This implies that for a sufficiently small $\varepsilon$, there is a homeomorphism between $Z_\varepsilon$ and $Z_h$. Therefore, $\mu_{\rho}^G([M,\tau])=[Z_h]_G\ne 0$ in $\Omega_{m-n}^H(G).$
\end{proof}




Let  $G={\Bbb Z}_2$ and $\rho(x)=-x$. Denote by $\mu_n:=\mu_{\rho}^{{\Bbb Z}_2}$.
Then we have
$$\mu_n:{\mathfrak N}_{m}({\Bbb Z}_2)\to{\mathfrak N}_{m-n}({\Bbb Z}_2).$$

If $u\in{\mathfrak N}_m({\Bbb Z}_2)$ is given explicitly, then we can easily find $\mu_n(u)$.
\begin{lemma} \label{lm6}
$$
\mu_n([M^m,T])=\mu_n\left(\sum\limits_{k=0}^m {[V^k]\,[{\Bbb S}^{m-k},A]}\right)= \sum\limits_{k=0}^{m-n} {[V^k]\,[{\Bbb S}^{m-n-k},A]}.
$$
\end{lemma}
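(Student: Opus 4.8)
The plan is to exploit the fact that $\mu_n = \mu_{\rho}^{{\Bbb Z}_2}$ is a homomorphism of ${\mathfrak N}_*$-modules, so it suffices to understand its effect on the ${\mathfrak N}_*$-module generators $[{\Bbb S}^{m-k},A]$ and then extend linearly. First I would verify that $\mu_n$ is ${\mathfrak N}_*$-linear: given a class $[V^k]\in{\mathfrak N}_k$ and a free ${\Bbb Z}_2$-manifold $[P,S]$, a generic antipodal $f:P\to{\Bbb R}^n$ pulls back to a generic antipodal map on $V^k\times P$ (compose with projection; a small generic perturbation keeps it antipodal by Lemma \ref{lm3}), and its zero set is $V^k\times Z_f$. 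Hence $\mu_n([V^k][P,S]) = [V^k]\,\mu_n([P,S])$, reducing the lemma to computing $\mu_n([{\Bbb S}^{m-k},A])$ for each $k$.

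The core computation is therefore: for the antipodal sphere $({\Bbb S}^{r},A)$ with $r = m-k$, show $\mu_n([{\Bbb S}^r,A]) = [{\Bbb S}^{r-n},A]$ when $r\ge n$, and $\mu_n([{\Bbb S}^r,A]) = 0$ when $r < n$. For this I would write down an explicit generic antipodal map ${\Bbb S}^r\to{\Bbb R}^n$, namely the linear projection $(x_1,\dots,x_{r+1})\mapsto(x_1,\dots,x_n)$ (after possibly a small equivariant perturbation to make it generic in the PL sense, as in Lemma \ref{lm3}), whose zero set is exactly the subsphere $\{x_1=\dots=x_n=0\}\cong{\Bbb S}^{r-n}$ with the restricted antipodal involution — this is $[{\Bbb S}^{r-n},A]$. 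When $r<n$ the same projection (or the standard antipodal embedding ${\Bbb S}^r\hookrightarrow{\Bbb R}^n$ used already in the proof of Theorem \ref{thmBUT}) has empty zero set, so $\mu_n([{\Bbb S}^r,A])=0$. Since $\mu_n$ is well-defined on cobordism classes by Lemma \ref{lm4}, these representative computations suffice.

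Finally I would assemble the pieces: writing $[M^m,T] = \sum_{k=0}^m [V^k][{\Bbb S}^{m-k},A]$, apply ${\mathfrak N}_*$-linearity and the sphere computation to get
$$
\mu_n([M^m,T]) = \sum_{k=0}^m [V^k]\,\mu_n([{\Bbb S}^{m-k},A]) = \sum_{k=0}^{m-n} [V^k]\,[{\Bbb S}^{m-n-k},A],
$$
the terms with $k>m-n$ (i.e.\ $m-k<n$) vanishing. I expect the main obstacle to be the bookkeeping around genericity in the PL category: the naive linear projection need not be simplicial-generic for a given equivariant triangulation, so one must either invoke Lemma \ref{lm3} to perturb it while checking the zero set changes only by an equivariant homeomorphism (hence the cobordism class is unchanged), or appeal to the fact established before Lemma \ref{lm5} that the cobordism class $[Z_f]_G$ is independent of the choice of generic approximation. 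A secondary point requiring care is confirming that the product construction $V^k\times P$ genuinely realizes the ${\mathfrak N}_*$-module multiplication in ${\mathfrak N}_*({\Bbb Z}_2)$ compatibly with the basis $[{\Bbb S}^{m-k},A]$ from \cite[Theorem 23.2]{CF}, but this is standard.
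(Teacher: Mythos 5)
Your proposal is correct and follows essentially the same route as the paper: the paper's proof also rests on the single computation that the standard projection $Pr:{\Bbb S}^d\to{\Bbb R}^n$ has $Pr^{-1}(0)={\Bbb S}^{d-n}$, so $\mu_n([{\Bbb S}^d,A])=[{\Bbb S}^{d-n},A]$, and then concludes by the ${\mathfrak N}_*$-module structure. The only difference is that you spell out the ${\mathfrak N}_*$-linearity (via the product construction $V^k\times P$) and the vanishing for spheres of dimension below $n$, which the paper leaves implicit in the phrase ``the lemma follows from this equality.''
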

\begin{proof} Since $Z_{f_i}=F^{-1}(0)\bigcap M_i$ in Lemma \ref{lm4} are cobordant to each other, we can consider the standard projection $Pr:{\Bbb S}^{d}\to {\Bbb R}^{n}$. Then $Pr^{-1}(0)={\Bbb S}^{d-n}$. This yields  $\mu_n\left([{\Bbb S}^d,A]\right)=[{\Bbb S}^{d-n},A]$.
It is clear that the lemma follows from this equality.
\end{proof}

Lemma \ref{lm6} and Theorem \ref{thmGcb} imply the following corollary.
\begin{cor} Let
$
[M^m,T]={[V^{m-n}][{\Bbb S}^{n},A]}+{[V^{m-n+1}][{\Bbb S}^{n-1},A]}+\ldots+{[V^{m}][{\Bbb S}^{0},A]}
$ in ${\mathfrak N}_m({\Bbb Z}_2)$
with $[V^{m-n}]\ne0$ in ${\mathfrak N}_{m-n}$.  Then for any antipodal continuous  map $f:M^m\to {\Bbb R}^n$  the set $Z_f$ is not empty.
\end{cor}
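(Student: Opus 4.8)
The plan is to deduce the corollary from Theorem \ref{thmGcb} by an explicit computation of the obstruction $\mu_n([M^m,T])$ using Lemma \ref{lm6}. First I would unwind what the hypothesis asserts: in the canonical expansion $[M^m,T]=\sum_{k=0}^m[V^k]\,[{\Bbb S}^{m-k},A]$ coming from the ${\mathfrak N}_*$-module structure of ${\mathfrak N}_*({\Bbb Z}_2)$, the stated form of $[M^m,T]$ is precisely the statement that $[V^k]=0$ for every $k<m-n$, so that $[M^m,T]=\sum_{k=m-n}^m[V^k]\,[{\Bbb S}^{m-k},A]$.

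Next I would apply Lemma \ref{lm6}, which evaluates $\mu_n$ term by term and shifts the sphere index down by $n$:
$$\mu_n([M^m,T])=\sum_{k=0}^{m-n}[V^k]\,[{\Bbb S}^{m-n-k},A].$$
Since $[V^k]=0$ for $k<m-n$, the only surviving summand is the one with $k=m-n$, namely $[V^{m-n}]\,[{\Bbb S}^{0},A]\in{\mathfrak N}_{m-n}({\Bbb Z}_2)$.

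The one point that genuinely needs an argument rather than a substitution is that this class is nonzero. Here I would invoke the structure theorem recalled in Section 4: ${\mathfrak N}_*({\Bbb Z}_2)$ is a \emph{free} ${\mathfrak N}_*$-module with basis $\{[{\Bbb S}^j,A]\}_{j\ge0}$. Freeness means that multiplication by the basis element $[{\Bbb S}^0,A]$ is an injection ${\mathfrak N}_{m-n}\hookrightarrow{\mathfrak N}_{m-n}({\Bbb Z}_2)$, so the hypothesis $[V^{m-n}]\ne0$ in ${\mathfrak N}_{m-n}$ forces $[V^{m-n}]\,[{\Bbb S}^0,A]\ne0$, i.e. $\mu_n([M^m,T])\ne0$.

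Finally, Theorem \ref{thmGcb}, applied with $G={\Bbb Z}_2$, $H=O$, and $\rho(x)=-x$ (so that $\mu_{\rho}^G=\mu_n$ and $\Omega^H_*(G)={\mathfrak N}_*({\Bbb Z}_2)$), yields the conclusion: since the obstruction $\mu_n([M^m,T])$ is nonzero, any antipodal continuous $f:M^m\to{\Bbb R}^n$ has $Z_f\ne\emptyset$. I do not expect a real obstacle here: the whole proof is the chain ``rewrite the hypothesis, apply Lemma \ref{lm6}, use module-freeness, quote Theorem \ref{thmGcb}''. The only place to be careful is the index bookkeeping in Lemma \ref{lm6}, to make sure the surviving summand is exactly $[{\Bbb S}^0,A]$ and not a sphere of positive dimension.
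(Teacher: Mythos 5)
Your argument is correct and is exactly the route the paper intends: the paper states the corollary as an immediate consequence of Lemma \ref{lm6} and Theorem \ref{thmGcb}, and your write-up simply fills in the details (the vanishing of $[V^k]$ for $k<m-n$, the surviving term $[V^{m-n}][{\Bbb S}^0,A]$, and nonvanishing via freeness of ${\mathfrak N}_*({\Bbb Z}_2)$ as an ${\mathfrak N}_*$-module). The index bookkeeping is right, so nothing further is needed.
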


This corollary can be extended for the case $G=({\Bbb Z}_2)^k={\Bbb Z}_2\times\ldots\times{\Bbb Z}_2$. Note that  ${\mathfrak N}_*(({\Bbb Z}_2)^k)={\mathfrak N}_*({\Bbb Z}_2)\otimes\ldots\otimes{\mathfrak N}_*({\Bbb Z}_2)$  \cite[Sec. 29]{CF}. In other words,
 ${\mathfrak N}_*(({\Bbb Z}_2)^k)$ is a free ${\mathfrak N}_*$-module with generators $\{\gamma_{i_1}\otimes\ldots\otimes{\gamma_{i_k}}\}, \, i_1,\ldots,i_k=0,1,\ldots$, where $\gamma_i:=[{\Bbb S}^{i},A]\in {\mathfrak N}_i({\Bbb Z}_2)$.

\begin{cor} \label{cor4} Let $M$ be a closed PL manifold with a free action $\Psi$ of $({\Bbb Z}_2)^k$. Let
$
[M^m,\Psi]=\sum{[V^{i_1,\ldots,i_k}]\,\gamma_{i_1}\otimes\ldots\otimes\gamma_{i_k}}$ in ${\mathfrak N}_m(({\Bbb Z}_2)^k)$
with $[V^{i_1,\ldots,i_k}]\ne0$ in ${\mathfrak N}_{*}$. Consider $({\Bbb Z}_2)^k$ with generators $T_1,\ldots,T_k$ acting on ${\Bbb R}^{i_1}\oplus\ldots\oplus{\Bbb R}^{i_k}$ by
$T_\ell(x)=-x$ for $x\in {\Bbb R}^{i_\ell}$.
 Then for any  continuous equivariant map
$f:M^m\to {\Bbb R}^{i_1}\oplus\ldots\oplus{\Bbb R}^{i_k}$  the set $Z_f$ is not empty.
\end{cor}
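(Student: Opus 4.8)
The plan is to deduce the corollary from Theorem~\ref{thmGcb}: it suffices to prove that the invariant $\mu_\rho^{({\Bbb Z}_2)^k}([M,\Psi])$ is nonzero, and this I would establish by computing $\mu_\rho^{({\Bbb Z}_2)^k}$ on the free ${\mathfrak N}_*$-module generators $\gamma_{a_1}\otimes\cdots\otimes\gamma_{a_k}$ of ${\mathfrak N}_*(({\Bbb Z}_2)^k)$, exactly as $\mu_n$ was computed in Lemma~\ref{lm6}. Set $n:=i_1+\cdots+i_k$ and let $\rho$ be the representation on ${\Bbb R}^{i_1}\oplus\cdots\oplus{\Bbb R}^{i_k}$ described in the statement. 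Since $T_\ell$ acts on ${\Bbb R}^{i_\ell}$ by $x\mapsto -x$, the only vector fixed by all of $({\Bbb Z}_2)^k$ is the origin, so $({\Bbb R}^n)^{({\Bbb Z}_2)^k}=\{0\}$; and $[V^{i_1,\ldots,i_k}]\ne 0$ forces $\dim V^{i_1,\ldots,i_k}=m-n\ge 0$, hence $m\ge n$. So all hypotheses of Theorem~\ref{thmGcb} are in place, and it remains to show $\mu_\rho^{({\Bbb Z}_2)^k}([M,\Psi])\ne 0$ in ${\mathfrak N}_{m-n}(({\Bbb Z}_2)^k)$.

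For the computation, represent $\gamma_{a_1}\otimes\cdots\otimes\gamma_{a_k}$ by ${\Bbb S}^{a_1}\times\cdots\times{\Bbb S}^{a_k}$ with $T_\ell$ acting antipodally on the $\ell$-th factor and trivially on the others (a free action, which represents this generator under the isomorphism ${\mathfrak N}_*(({\Bbb Z}_2)^k)={\mathfrak N}_*({\Bbb Z}_2)^{\otimes k}$). As a test map I would take $f=Pr_1\times\cdots\times Pr_k$, where $Pr_\ell\colon{\Bbb S}^{a_\ell}\to{\Bbb R}^{i_\ell}$ is the standard orthogonal projection if $a_\ell\ge i_\ell$, and an equivariant embedding of ${\Bbb S}^{a_\ell}$ as a unit subsphere of ${\Bbb R}^{i_\ell}$ (hence with no zeros) if $a_\ell<i_\ell$. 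This $f$ is equivariant, and after the standard perturbation to a simplicial generic map (as in the proofs of Lemmas~\ref{lm5} and~\ref{lm6}) it is transversal to zeros with $Z_f=\prod_\ell Pr_\ell^{-1}(0)$; the latter is ${\Bbb S}^{a_1-i_1}\times\cdots\times{\Bbb S}^{a_k-i_k}$ when $a_\ell\ge i_\ell$ for every $\ell$ and is empty otherwise, and the action restricted to $Z_f$ is again the product antipodal action. Hence $\mu_\rho^{({\Bbb Z}_2)^k}(\gamma_{a_1}\otimes\cdots\otimes\gamma_{a_k})$ equals $\gamma_{a_1-i_1}\otimes\cdots\otimes\gamma_{a_k-i_k}$ if $a_\ell\ge i_\ell$ for all $\ell$ and equals $0$ otherwise. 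Because $\mu_\rho^{({\Bbb Z}_2)^k}$ is ${\mathfrak N}_*$-linear (multiplying $M$ by a closed manifold $P$ with the trivial action replaces $Z_f$ by $P\times Z_f$), applying it to $[M,\Psi]=\sum[V^{a_1,\ldots,a_k}]\,\gamma_{a_1}\otimes\cdots\otimes\gamma_{a_k}$ gives
\[
\mu_\rho^{({\Bbb Z}_2)^k}([M,\Psi])=\sum_{a_\ell\ge i_\ell\ \forall\ell}[V^{a_1,\ldots,a_k}]\,\gamma_{a_1-i_1}\otimes\cdots\otimes\gamma_{a_k-i_k}.
\]

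Finally, I would observe that the generator $\gamma_0\otimes\cdots\otimes\gamma_0$ occurs on the right only in the single term indexed by $(a_1,\ldots,a_k)=(i_1,\ldots,i_k)$, with coefficient $[V^{i_1,\ldots,i_k}]$, which is nonzero by hypothesis. Since $\{\gamma_{b_1}\otimes\cdots\otimes\gamma_{b_k}\}$ is an ${\mathfrak N}_*$-basis of ${\mathfrak N}_*(({\Bbb Z}_2)^k)$, this forces $\mu_\rho^{({\Bbb Z}_2)^k}([M,\Psi])\ne 0$, and Theorem~\ref{thmGcb} then yields $Z_f\ne\emptyset$ for every continuous equivariant $f\colon M^m\to{\Bbb R}^{i_1}\oplus\cdots\oplus{\Bbb R}^{i_k}$. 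The step I expect to be the main obstacle is the middle one: checking carefully that the product of the standard projections may be taken simplicial generic with exactly the stated zero set and induced action, and that $\mu_\rho^{({\Bbb Z}_2)^k}$ is ${\mathfrak N}_*$-linear — equivalently, that the direct-sum structure of $\rho$ is matched by the tensor-product structure of ${\mathfrak N}_*(({\Bbb Z}_2)^k)$, so that $\mu_\rho^{({\Bbb Z}_2)^k}$ acts as $\mu_{i_1}\otimes\cdots\otimes\mu_{i_k}$. This is the faithful generalization of Lemma~\ref{lm6} from one to $k$ tensor factors.
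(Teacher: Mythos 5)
Your proposal is correct and follows essentially the route the paper intends: the paper leaves Corollary~\ref{cor4} without a written proof, presenting it as the $({\Bbb Z}_2)^k$-analogue of the preceding corollary obtained from Theorem~\ref{thmGcb} together with the evident generalization of Lemma~\ref{lm6}, which is exactly the computation $\mu_\rho^{({\Bbb Z}_2)^k}=\mu_{i_1}\otimes\cdots\otimes\mu_{i_k}$ on the generators $\gamma_{a_1}\otimes\cdots\otimes\gamma_{a_k}$ that you carry out with the product of standard projections. Your extraction of the coefficient of $\gamma_0\otimes\cdots\otimes\gamma_0$, namely $[V^{i_1,\ldots,i_k}]\ne 0$, correctly yields $\mu_\rho^{({\Bbb Z}_2)^k}([M,\Psi])\ne 0$, so the argument matches the paper's.
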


 Let $\tilde G(4,2)$ be the oriented Grassmann manifold, that is, the space of all oriented 2-dimensional subspaces of ${\Bbb R}^4$. Note that ${\Bbb Z}_2\times{\Bbb Z}_2$ acts on $\tilde G(4,2)$, where for $x\in\tilde G(4,2)$, $T_1(x)$ changes the orientation of $x$ and $T_2(x)$ is the oriented orthogonal complement of an oriented 2-plane $x$ in  ${\Bbb R}^4$.

 It is well known that $\tilde G(4,2)$ is diffeomorphic to ${\Bbb S}^{2}\times{\Bbb S}^{2}$ \cite[3.2.3]{FR}. For $(u,v)\in{\Bbb S}^{2}\times{\Bbb S}^{2}$, we have $T_1(u,v)=(-u,-v)$ and $T_2(u,v)=(u,-v)$. Then for $\Psi=(T_1,T_2)$ we have: $[\tilde G(4,2),\Psi]=\gamma_2\otimes\gamma_2$ in ${\mathfrak N}_4({\Bbb Z}_2\times{\Bbb Z}_2)$. Then Corollary \ref{cor4} (or Theorem \ref{thmdeg}) yields:
 \begin{cor} Let ${\Bbb Z}_2\times{\Bbb Z}_2$ act on  ${\Bbb R}^4={\Bbb R}^2\oplus{\Bbb R}^2$ by $T_1(u,v)=(-u,-v)$ and $T_2(u,v)=(u,-v)$. Then for any equivariant continuous $f:\tilde G(4,2)\to {\Bbb R}^4$ the set $Z_f$ is not empty. \end{cor}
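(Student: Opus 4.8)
The plan is to apply Corollary \ref{cor4} directly, so the entire argument reduces to one concrete computation: identifying the free $({\Bbb Z}_2)^2$-cobordism class of $(\tilde G(4,2),\Psi)$ inside ${\mathfrak N}_4({\Bbb Z}_2\times{\Bbb Z}_2)$. First I would invoke the diffeomorphism $\tilde G(4,2)\cong{\Bbb S}^2\times{\Bbb S}^2$ from \cite[3.2.3]{FR} and check, under this identification, that the geometric involutions become $T_1(u,v)=(-u,-v)$ and $T_2(u,v)=(u,-v)$: reversing the orientation of an oriented $2$-plane in ${\Bbb R}^4$ corresponds to the antipodal map on the first sphere factor together with the antipodal map on the second (since the Plücker/self-dual--anti-self-dual splitting sends the oriented plane to a pair of points on the two $2$-spheres, and orientation reversal swaps the roles in a way that negates both), while passing to the oriented orthogonal complement fixes the self-dual point and negates the anti-self-dual point, i.e. $(u,v)\mapsto(u,-v)$. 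This is the step I expect to be the main obstacle — everything else is bookkeeping, but one must be careful about which sphere factor corresponds to self-dual and which to anti-self-dual, and about the precise effect of orientation reversal versus complementation on those coordinates; I would pin it down by an explicit Plücker-coordinate calculation on a basis $2$-plane.

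Once the action is identified as $\Psi=(T_1,T_2)$ with $T_1=$ antipodal on both factors and $T_2=$ antipodal on the second factor only, I would read off its class in ${\mathfrak N}_4(({\Bbb Z}_2)^2)={\mathfrak N}_*({\Bbb Z}_2)\otimes{\mathfrak N}_*({\Bbb Z}_2)$. Here the point is that $({\Bbb S}^2,A)$ with $A$ the antipodal involution represents the generator $\gamma_2=[{\Bbb S}^2,A]\in{\mathfrak N}_2({\Bbb Z}_2)$, and the external product structure on ${\mathfrak N}_*(({\Bbb Z}_2)^2)$ (as in \cite[Sec. 29]{CF}) shows that a product $G$-manifold of the form $({\Bbb S}^2,A)\times({\Bbb S}^2,A)$ with the diagonal action in the first coordinate and the antipodal action in the second represents $\gamma_2\otimes\gamma_2$. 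Thus $[\tilde G(4,2),\Psi]=\gamma_2\otimes\gamma_2$ in ${\mathfrak N}_4({\Bbb Z}_2\times{\Bbb Z}_2)$, exactly as asserted in the excerpt preceding the corollary.

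Finally I would apply Corollary \ref{cor4} with $k=2$, $m=4$, $i_1=i_2=2$, and the single nonzero coefficient $[V^{2,2}]=1\in{\mathfrak N}_0$ (which is of course nonzero). The corollary's hypothesis — $[M^m,\Psi]=\sum [V^{i_1,\dots,i_k}]\,\gamma_{i_1}\otimes\cdots\otimes\gamma_{i_k}$ with some $[V^{i_1,\dots,i_k}]\ne0$ in ${\mathfrak N}_*$ — is met, and the prescribed linear action of $({\Bbb Z}_2)^2$ on ${\Bbb R}^{i_1}\oplus{\Bbb R}^{i_2}={\Bbb R}^2\oplus{\Bbb R}^2$, namely $T_\ell$ acting by $-1$ on the $\ell$-th summand, is precisely $T_1(u,v)=(-u,-v)$, $T_2(u,v)=(u,-v)$ as in the statement. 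Hence for any equivariant continuous $f:\tilde G(4,2)\to{\Bbb R}^4$ the zero set $Z_f$ is nonempty. (Alternatively, since $m=n=4$, one can bypass the cobordism machinery of Section 5 and instead use Theorem \ref{thmdeg}: Lemma \ref{lm6} computes $\mu_4(\gamma_2\otimes\gamma_2)$ down to $[{\Bbb S}^0,A]$-type terms with an odd coefficient, giving $\deg_{({\Bbb Z}_2)^2}(h)=1$ for a suitable generic $h$, which is the version I would mention as a remark.)
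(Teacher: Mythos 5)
Your proposal follows essentially the same route as the paper: identify $\tilde G(4,2)\cong{\Bbb S}^2\times{\Bbb S}^2$ so that the two involutions become $T_1(u,v)=(-u,-v)$ and $T_2(u,v)=(u,-v)$, conclude $[\tilde G(4,2),\Psi]=\gamma_2\otimes\gamma_2$ in ${\mathfrak N}_4({\Bbb Z}_2\times{\Bbb Z}_2)$, and apply Corollary \ref{cor4} (the paper likewise offers Theorem \ref{thmdeg} as an alternative). The only caveat is your closing aside invoking Lemma \ref{lm6}, which is stated for $G={\Bbb Z}_2$ rather than $({\Bbb Z}_2)^2$; but that remark is not needed for your main argument, which matches the paper's.
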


\medskip

\noindent{\bf Acknowledgment.} I  wish to thank Arseniy Akopyan, Imre  B\'ar\'any, Pavle Blagojevi\'c, Mike Field, Ji\v{r}\'{i} Matou\v{s}ek,  Alexey Volovikov, and G\"unter Ziegler for helpful discussions and  comments. I am most grateful to Roman Karasev and to the anonymous referee for several critical comments and corrections.

 \medskip

O. R. Musin, Department of Mathematics, University of Texas at Brownsville, 80 Fort Brown, Brownsville, TX, 78520.

 {\it E-mail address:} oleg.musin@utb.edu

\end{document}